\DeclareMathOperator{\reflectionvector}{\vec{V}}
\newcommand{\reflectionplane}[1][\reflectionvector]{\ensuremath{P_{#1}}}
\newcommand{\reflectionmap}[1][\reflectionvector]{\ensuremath{R_{#1}}}
\newcommand{\reflectionset}[2][\reflectionvector]{\ensuremath{{#2}_{#1}}}
\newcommand{\reflectionhalfspace}[1][\reflectionvector]{\ensuremath{\reflectionset[{#1}]{H}}}
\address{Department of Mathematics, University of California San Diego}
\email{pbryan@ucsd.edu}
\email{jalouie@math.ucsd.edu}
\keywords{Curve shortening flow, Ancient solutions, Aleksandrov reflection, Harnack}
\subjclass[2010]{53C44, 35K55, 58J35}
\date{}
\author{Paul Bryan and Janelle Louie}
\title[Convex Ancient CSF on $\sphere^2$]{Classification of Convex Ancient Solutions to Curve Shortening Flow on the Sphere}
\begin{document}

\maketitle
\begin{abstract}
We prove that the only closed, embedded ancient solutions to the curve shortening flow on $\sphere^2$ are equators or shrinking circles, starting at an equator at time $t=-\infty$ and collapsing to the north pole at time $t=0$. To obtain the result, we first prove a Harnack inequality for the curve shortening flow on the sphere. Then an application of the Gauss-Bonnet, easily allows us to obtain curvature bounds for ancient solutions leading to backwards smooth convergence to an equator. To complete the proof, we use an Aleksandrov reflection argument to show that maximal symmetry is preserved under the flow.
\end{abstract}

\section{Introduction}
\label{sec-1}

In this paper we study a time-dependent family of smooth, embedded, closed curves $\gamma_t = F_t(\sphere^1) \subset \sphere^2$ on the unit sphere, evolving by the curve shortening flow:
\begin{equation}
\label{eq:csf}
\pd[F_t]{t} = -\curvecurv \nor
\end{equation}
where $\curvecurv$ is the (signed) geodesic curvature of $\gamma_t$ with respect to a choice of smooth unit normal vector field $\nor$. Our aim is to classify convex ($\curvecurv > 0$) ancient solutions, which by definition exist on the maximal time interval $(-\infty,T)$. If $T\ne \infty$, we will assume from now on that $T=0$. We prove the following theorem:

\begin{theorem}
[Classification of Ancient Solutions]
\label{thm:classification_ancient}
Let the family of curves $\gamma_t$ be a closed, convex, embedded ancient solution to the curve shortening flow on the sphere $S^2$. Then $\gamma_t$ is either a fixed equator for all $t \in (-\infty, \infty)$, or a family of shrinking geodesic circles, existing on $(-\infty, 0)$, converging to an equator as $t \to -\infty$ and shrinking to a point at $T=0$.
\end{theorem}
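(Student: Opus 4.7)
The plan is to establish the classification via a dichotomy, then prove backwards convergence to an equator, and finally promote this asymptotic symmetry to exact rotational symmetry at every time.

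\emph{Dichotomy.} Under CSF on $S^2$ the geodesic curvature evolves by $\partial_t\curvecurv = \partial_s^2\curvecurv + \curvecurv^3 + \curvecurv$, to which the strong maximum principle applies: if $\curvecurv \geq 0$ and $\curvecurv = 0$ at some spacetime point, then $\curvecurv \equiv 0$ on that time slice, so $\gamma_{t_0}$ is a closed geodesic (hence an equator), and by uniqueness of the flow $\gamma_t$ is a stationary equator for all $t$. Henceforth I assume $\curvecurv > 0$ throughout the solution.

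\emph{Backwards convergence to an equator.} By Gauss-Bonnet applied to the convex region $\Omega_t$ bounded by $\gamma_t$,
$$\int_{\gamma_t}\curvecurv\,ds + |\Omega_t| = 2\pi.$$
Strict convexity forces $\Omega_t$ into an open hemisphere, so $|\Omega_t| < 2\pi$, and since $\frac{d}{dt}|\Omega_t| = -\int_{\gamma_t}\curvecurv\,ds < 0$ the enclosed area increases monotonically as $t\to-\infty$ to some limit $A_\infty \leq 2\pi$, and correspondingly $\int_{\gamma_t}\curvecurv\,ds \to 2\pi - A_\infty$. The Harnack inequality proved earlier in the paper, combined with this integral bound and the elementary length bound $|\gamma_t| \leq 2\pi$ for a convex curve in a hemisphere, forces $\max_{\gamma_t}\curvecurv \to 0$ as $t\to-\infty$, which in turn forces $A_\infty = 2\pi$. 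Standard parabolic regularity then upgrades this to smooth subsequential convergence of $\gamma_t$ to a limiting equator $E_\infty$.

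\emph{Aleksandrov reflection.} For any great circle $P$ orthogonal to $E_\infty$, the reflection $\reflectionmap[P]$ fixes $E_\infty$ setwise. Adapting the Aleksandrov moving planes method to the sphere, the signed difference between $\gamma_t \cap \reflectionhalfspace[P]$ and the $\reflectionmap[P]$-image of $\gamma_t \cap (S^2 \setminus \reflectionhalfspace[P])$ is monotone in $t$ under CSF via the avoidance principle, and tends to zero as $t\to-\infty$ by the smooth convergence $\gamma_t \to E_\infty$. Monotonicity then forces this quantity to vanish identically in $t$, so $\reflectionmap[P](\gamma_t) = \gamma_t$ for every $t$ and every such $P$. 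The isometry subgroup generated by the reflections $\reflectionmap[P]$ as $P$ varies over all great circles orthogonal to $E_\infty$ is the full rotation group about the axis through the poles of $E_\infty$. Consequently each $\gamma_t$ is rotationally symmetric about this axis, and hence a geodesic circle about one of the poles, giving the shrinking family in the statement.

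\emph{Main obstacle.} I expect the heart of the argument to be the quantitative backwards asymptotic step: converting the Gauss-Bonnet integral bound into pointwise curvature decay via the Harnack inequality, and extracting smooth convergence to the limiting equator. The dichotomy is immediate from the strong maximum principle, and the Aleksandrov reflection step is conceptually clean once backwards convergence to $E_\infty$ is established, though it requires a careful sphere-adapted setup of the half-spaces $\reflectionhalfspace[P]$ and the corresponding comparison arguments.
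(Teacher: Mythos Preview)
Your overall architecture---dichotomy, backwards convergence via Gauss--Bonnet plus Harnack, then Aleksandrov reflection---matches the paper. But the reflection step as you describe it does not go through, and the fix is precisely the main idea of the paper's Section~5.

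You reflect across great circles $P$ \emph{orthogonal} to the limiting equator $E_\infty$, observe that the signed difference between $\gamma_t^-$ and $R_P(\gamma_t^+)$ tends to zero as $t\to-\infty$, and then assert that this difference is ``monotone in $t$ via the avoidance principle,'' forcing it to vanish identically. The avoidance principle does not give monotonicity of the difference; it only says that a one-sided ordering $R_P(\gamma_t^+)\geq\gamma_t^-$, once established at some time $t_0$, persists for $t\geq t_0$. With $P\perp E_\infty$, both $\gamma_t^-$ and $R_P(\gamma_t^+)$ converge to the \emph{same} half-equator as $t\to-\infty$, so you never obtain a one-sided inequality at any finite time to feed into the comparison argument; the smooth convergence gives smallness, not a sign. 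Moreover the two arcs meet along $P$, so even if you had a non-strict inequality you would need a Hopf-lemma angle condition at the boundary, which is again unavailable when $P\perp E_\infty$ because the boundary angles also coincide in the limit.

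The paper resolves this by tilting: one reflects across planes $P_V$ making a positive angle $\delta\in(0,\pi/4)$ with the polar axis. Then $R_V(E_\infty)$ is a \emph{different} great circle lying strictly above $E_\infty$ on the relevant half, so for all sufficiently negative $t$ one genuinely has $R_V(\gamma_t^+)>\gamma_t^-$ with a strict angle gap at the endpoints (Lemma~5.1). The maximum principle then propagates this strict inequality forward (Lemma~5.2), uniformly in $\delta$, and only afterwards does one let $\delta\to 0$ to recover invariance under the equator-preserving reflections (Proposition~5.3). A secondary, easily repaired issue: your backwards-convergence paragraph is circular as written, since you invoke $\max\kappa\to 0$ to deduce $A_\infty=2\pi$, but the argument for $\max\kappa\to 0$ requires $\int\kappa\to 0$, i.e.\ $A_\infty=2\pi$, as input. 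The paper avoids this by computing directly that $\partial_t\int\kappa=\int\kappa$, so $\int\kappa=Ce^t\to 0$.
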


The curve shortening flow has been studied extensively in the plane. The principal result is the Gage-Hamilton-Grayson theorem \cite{MR840401, MR906392}, stating that arbitrary, smooth, embedded, closed solutions $\gamma_t$ shrink to round points in finite time $T<\infty$. On surfaces, the curves  shortening flow either collapses to a round point in finite time (as in the plane case), or exists for all time, converging to a closed geodesic as $t\to\infty$ \cite{MR979601,MR1630194,MR2668967}.

Ancient solutions to the curve shortening flow in the plane have been classified in \cite{MR2669361} as precisely the contracting circles and the contracting Angenent ovals. The former is a Type I ancient solution ($\lim_{t\to-\infty} \sup_{\gamma_t}\abs{\curvecurv} < \infty$), while the latter is a Type II ancient solution ($\lim_{t\to-\infty} \sup_{\gamma_t}\abs{\curvecurv} = \infty$). Our theorem shows that the only Type I ancient solution on the sphere are the "obvious" ones, and that no Type II ancient solutions exist.

To begin in section \ref{sec-2}, we introduce some notation and preliminary results. Then in \ref{sec-3}, we obtain a Harnack inequality for convex curves evolving by curve shortening on $\sphere^2$ in Theorem \ref{thm:harnack}. This allows us to show that for convex curves, the curvature is monotonically increasing, hence bounded on any time interval $(-\infty, t_0]$. A standard bootstrapping argument then furnishes us with bounds on all higher derivatives. Next, in section \ref{sec-4} we use the Gauss-Bonnet theorem to show that $\int_{\gamma_t}\curvecurv \to 0$ as $t\to -\infty$. Combining this with the curvature estimates, we are readily able to show that $\gamma_t$ converges smoothly to an equator as $t\to-\infty$. To complete the theorem, in section \ref{sec-5}, we use a perturbed, parabolic version of Aleksandrov reflection inspired by \cite{MR1846204,MR1386736}. This shows that $\gamma_t$ reflects "above" (the precise definition is given in section \ref{sec-5}) itself for the perturbed reflection and hence so too in the limit for all reflections preserving the equator. It is then easy to show that each $\gamma_t$ is preserved under all equator preserving reflections and therefore is a round circle.

\section*{Acknowledgements}
Both authors would like to thank Professor Bennett Chow for suggesting this problem and providing much useful guidance on laying out the program. The second author is especially thankful, this paper arising from her Ph.D. thesis under Professor Chow's supervision. This paper was completed while the first author was a SEW Visiting Assistant Professor at UCSD, acting as an informal Ph.D. advisor to the second author's Ph.D. research at UCSD.
\section{Notation and Preliminaries}
\label{sec-2}
\label{sec:notation}

\subsection{Convex Curves on $\sphere^2$.}
\label{sec-2-1}

A closed, embedded curve $\gamma$ divides $\sphere^2$ into two open, disjoint regions. If one region has area strictly small than $2\pi$, we label that region $\interior{\Omega}$ and call it the interior of $\gamma$. The other region $\exterior{\Omega}$ is the exterior. Let $\nor$ denote the interior pointing unit normal (so that for $x\in\gamma$, $\exp_x^{\sphere^2}(\epsilon\nor) \in \interior{\Omega}$ for small $\epsilon > 0$). Note that if the area of both regions equals $2\pi$, it is not possible in general to single out one region as interior and one as exterior; consider for instance when $\gamma$ is an equator. The issue is equivalent to defining a unit normal vector field on $\gamma$ and declaring it be either interior or exterior pointing. In such a case, we will choose a unit normal vector field $\nor$ and designate it interior pointing.

On a Riemannian manifold $M$, there are several notions of convexity in common use. We will use the following definitions: A subset $K \subset M$ is \emph{(geodesically) convex} if every two points $x,y \in K$ can be joined by a length minimising geodesic (of $M$) entirely contained within $K$. Note that we don't require this geodesic to be unique so that a closed hemisphere of $\sphere^n$ is geodesically convex. $K$ is \emph{weakly (geodesically) convex} if any two points in $K$ may be connected by a length minimizing geodesic of $K$. The difference between the two notions is that the length minimising geodesic in a weakly convex set need not be length minimising in $M$. It is well known that weakly convex is equivalent to non-negative boundary curvature. For example, if $M = \sphere^1 \times \RR$ is a flat cylinder, then a geodesic disc of radius greater than $\pi/2$ is weakly convex, but not convex. On the sphere however (as in the plane), weakly convex sets are convex (Proposition \ref{prop:convex_sets}). Since geodesics in $\sphere^2$ are great circles, and length minimising geodesics are half great circles, the length of any minimizing geodesic joining $x$ to $y$ is at most $\pi$. The following proposition characterises convex regions $K$ of $\sphere^2$ with boundary a smooth embedded curve. The results (and arguments) are standard and well known, but we could not find a good single reference, so give the details here.

\begin{prop}
\label{prop:convex_sets}
Let $K \subset \sphere^2$ be a connected open set with boundary $\bdry{K} = \gamma$ a smooth, closed  embedded curve. Let $\nor$ be the interior unit normal vector field along $\gamma$ and $\curvecurv$ the geodesic curvature with respect to $\nor$. Then the following are equivalent:

\begin{enumerate}
\item $K$ is convex,
\item for every $x \in \gamma$, $K \subset H^+_x$ where $H^+_x$ is the hemisphere with boundary the tangent great circle $E_x$ to $\gamma(x)$ and interior normal $\nor(x)$,
\item $\gamma$ may be written as the graph over an equator of a smooth function $f$ such that 
   \[
   f''(\theta) + 2 \tan(f(\theta)) (f'(\theta))^2 + \cos (f(\theta)) \sin (f(\theta)) \geq 0, \quad \theta \in \sphere^1.
   \]
\item The boundary curvature, $\curvecurv \geq 0$.
\end{enumerate}
\end{prop}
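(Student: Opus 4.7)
My plan is to prove the cycle $(1)\Rightarrow(2)\Rightarrow(4)\Rightarrow(3)\Rightarrow(1)$, with the hemisphere containment needed for $(4)\Rightarrow(3)$ being the principal obstacle.

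For $(1)\Rightarrow(2)$, I would argue by contradiction: if some $y\in K$ lies in the open opposite hemisphere $\sphere^2\setminus\overline{H^+_x}$ for some $x\in\gamma$, the minimizing geodesic $\sigma\subset\overline K$ from $x$ to $y$ (guaranteed by convexity) has initial velocity at $x$ with strictly negative $\nor(x)$-component. Since a neighborhood of $x$ in $\overline K$ lies on the $\nor(x)$-side of $\gamma$, $\sigma$ exits $\overline K$ immediately, a contradiction. For $(2)\Rightarrow(4)$, parametrize $\gamma$ by arclength near $x$ with $\gamma(0)=x$ and let $h(s)$ be the signed distance from $\gamma(s)$ to $E_x$, positive on $H^+_x$. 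Property (2) gives $h\geq 0$, and tangency gives $h(0)=h'(0)=0$, so $h''(0)\geq 0$; a direct computation identifies $h''(0)=\curvecurv(x)$.

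The crux is $(4)\Rightarrow(3)$, whose main content is the global fact that $\curvecurv\geq 0$ forces $\overline K$ into a closed hemisphere. By Gauss-Bonnet, $\int_\gamma\curvecurv\,ds+\operatorname{Area}(K)=2\pi$, so $\int\curvecurv\geq 0$, but this alone is not quite sharp enough. I would establish hemisphere containment by a sweeping argument: consider a one-parameter family of great circles obtained by rotating a fixed great circle through isometries, locate the critical rotation at which the circle first becomes tangent to $\gamma$, and use the strong maximum principle for the Jacobi-type ODE satisfied along $\gamma$ by the signed distance to the sweeping great circle, combined with $\curvecurv\geq 0$ at the contact point, to conclude $\overline K$ stays on one side. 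An Aleksandrov reflection argument of the kind used in section 5 of the paper would work equally well. Once $\overline K$ lies in a closed hemisphere $H$, local convexity from $\curvecurv\geq 0$ makes $\gamma$ starlike from any interior point $N\in K\cap H^\circ$, so $\gamma$ may be written as a graph $(\theta,f(\theta))$ over the equator $\partial H$ in spherical coordinates centered at $N$, and a standard computation of the geodesic curvature of such a graph yields the ODE inequality of (3).

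Finally, for $(3)\Rightarrow(1)$, the graph representation together with the ODE inequality can be used to verify directly that $\overline K=\{(\theta,\phi):\phi\leq f(\theta)\}$ is convex. One way is to note that reversing the computation in $(2)\Rightarrow(4)$ recovers $\curvecurv\geq 0$ from the ODE, then chain $(4)\Rightarrow(2)\Rightarrow(1)$, the last step following by checking that the minimizing geodesic between any two points of $K$ lies in $\cap_{x\in\gamma}\overline{H^+_x}=\overline K$. The principal difficulty throughout is the global hemisphere containment step above: it uses the positive curvature of $\sphere^2$ in an essential way, as the authors' flat cylinder example shows that no such implication can hold in general.
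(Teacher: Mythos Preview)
Your proposal is essentially correct but organizes the argument differently from the paper. The paper runs the cycle $(1)\Rightarrow(2)\Rightarrow(3)\Rightarrow(4)\Rightarrow(1)$, whereas you run $(1)\Rightarrow(2)\Rightarrow(4)\Rightarrow(3)\Rightarrow(1)$. The implications $(1)\Rightarrow(2)$ agree almost verbatim, and your $(2)\Rightarrow(4)$ via the signed-distance second derivative matches the paper's local Taylor expansion remark inside its $(2)\Rightarrow(3)$ step. The substantive divergence is in the hard direction. The paper proves $(4)\Rightarrow(1)$ directly by contrapositive: assuming $K$ is not convex, it finds a minimizing geodesic arc $\alpha$ of length $<\pi$ meeting $\overline K$ only at its endpoints, erects the perpendicular geodesic rays $\sigma_z$ along $\alpha$, and applies the second variation formula at the point where the distance to $\gamma$ is maximized to produce a point of nonpositive curvature. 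This avoids any global hemisphere containment lemma. You instead make hemisphere containment the centerpiece of $(4)\Rightarrow(3)$, via a sweeping-great-circle maximum principle; this is a legitimate and familiar route, but heavier, and your subsequent $(3)\Rightarrow(1)$ becomes somewhat circuitous, invoking auxiliary implications $(4)\Rightarrow(2)\Rightarrow(1)$ and the identity $\bigcap_x \overline{H^+_x}=\overline K$ that you do not fully justify (your sweeping argument as stated produces a \emph{single} supporting hemisphere, not one at every boundary point). The paper also obtains the graph representation differently: from $(2)$ it picks the center $p$ of a tangent hemisphere and argues that each geodesic ray from $p$ to the boundary equator meets $\gamma$ exactly once, whereas you deduce starlikeness from $\curvecurv\geq 0$ after hemisphere containment. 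Both approaches work; the paper's contrapositive-plus-second-variation is more self-contained, while yours foregrounds the hemisphere containment that the paper uses only implicitly later.
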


\begin{proof}
\begin{itemize}
\item (1) $\Rightarrow$ (2):

Let $x \in \gamma$ and let $E_x$ be the tangent great circle to $\gamma = \bdry{K}$ at $x$. With $\nor(x)$ pointing interior to $K$, let $H^+(x)$ denote the hemisphere with boundary $E_x$ and interior normal $\nor(x)$. Then $H^+(x) \intersect K \ne \emptyset$ and we need to show that in fact $K \subset H^+(x)$. 

Suppose otherwise, so that there is a $y$ in the interior of $H^-$, and let $\sigma$ be the unique (since $y \notin E_x$) length minimising geodesic joining $x$ to $y$. $\sigma$ lies in $H^-$, but also does not intersect $K$ in a neighbourhood of $x$ since it points exterior to $K$ ($\ip{\sigma'(x)}{\nor(x)} < 0$). On the other hand, since $K$ is convex and both $x,y\in K$ we must have $\sigma \subset K$, a contradiction.

\item (2) $\Rightarrow$ (3):

We work in polar coordinates $(\cos\phi\cos\theta, \cos\phi\sin\theta, \sin\phi)$ with the equator given by $\phi = 0$ (as opposed to the usual convention of $\phi=0$ at the north pole). For graphs $(\theta, f(\theta))$ the curvature is given by
\[
  \curvecurv = \frac{\cos(f)}{((f')^2 + \cos^2(f))^{3/2}} \left(f'' + 2(f')^2 \tan(f) + \sin (f) \cos(f)\right).
  \]
So here, we prove that (2) implies $\gamma$ may be written as a graph and that $\curvecurv \geq 0$.

\emph{$\gamma$ is a graph}: Take any tangent great circle $E$, $H^+$ the hemisphere with $K \subset H^+$, and $p$ the center of $H^+$. If $p \in \gamma$, then we can rotate $E$ along the geodesic joining $p$ to $x$ to obtain a new $E$ with $K \subset H^+$ and $p \in K$ (remember $K$ is open). Then by convexity, for any $y \in E$, the geodesic ray joining $p$ to $y$ intersects $\gamma$ in precisely one point, which we may denote by $f(y)$, expressing $\gamma$ as the graph of $f$ (which must be smooth).

$\curvecurv \geq 0$: This follows since $K$ lies on one side of every tangent great circle, so the local Taylor expansion of $\gamma$ shows that the curvature vector is interior pointing everywhere.

\item (3) $\Rightarrow$ (4):

As above, the condition on $f$ is precisely that $\curvecurv \geq 0$.

\item (4) $\Rightarrow$ (1):

We prove the contrapositive. Suppose that $K$ is not convex. We need to show that there is a $y \in \gamma$ such that $\curvecurv(y) < 0$.

Since $K$ is not convex, there is a length minimising geodesic $\alpha$ meeting $\clsr{K}$ only at it's endpoints. Moreover, we can choose $\alpha$ to have length strictly less than $\pi$: if not, then any arbitrary great circle must intersect $\cmplt{K}$ in a connected arc of length at least $\pi$ and so intersects $K$ in a connected arc of length at most $\pi$. Therefore any $x,y \in K$ may be connected by a length minimising geodesic contradicting that $K$ is not convex.

Let $\sigma_z$ be the continuous family of geodesic rays of length $\pi$ starting at $z \in \alpha$, perpendicular to $\alpha$. There are precisely two such families, and we choose $\sigma_z$ so that $\sigma_z$ intersects $\gamma$ at distance less than $\pi$ for $z$ near the endpoints of $\alpha$. Note that on the sphere, for each $z \in \alpha$, we have $\sigma_z \intersect \gamma \ne \emptyset$ since the end points of $\alpha$ (which also lie on $\gamma)$ lie on either side of the equator containing $\sigma_z$. Let $y = y(z) \in \gamma$ be the first point where $\gamma_z$ meets $\gamma$ and let $\rho(z) = d(z,y)$. 

Then $\rho$ is continuous and attains it's maximum at a point $z_0$ in the interior of $\alpha$ since $\rho = 0$ on the end points of $\alpha$ and by assumption $\rho(z) > 0$ for some $z \in \alpha$. Now we have $\rho_{z_0}$ a geodesic meeting $\gamma$ orthogonally at $y_0$, hence we can solve for $z$ as a function of $y$ near $y_0$. Then the second variation formula (varying $y$)) shows that $\curvecurv(y_0) \leq 0$.
\end{itemize}
\end{proof}

From here on we will freely use the results of the proposition without further comment and by a \emph{convex curve} we will mean a closed, embedded curve $\gamma$ satisfying any of the four conditions. Lastly, let us note that for $\gamma$ convex, approximating $\gamma$ by convex polygons (with geodesic arcs), it is possible to show that the length $L(\gamma) \leq 2\pi$, a fact we will employ in section 4. See \cite[Problem 1.10.1]{MR2208981} for details. In section 5, we will find it very useful to write $\gamma$ as the graph over an equator.
\subsection{Evolution of basic quantities}
\label{sec-2-2}

Let us now record the evolution of various quantities under the curve shortening flow. This is very similar to the plane case \cite{MR840401,MR742856}. We make use of the Serret-Frenet formulae,
\[
\conx_{\tang} \tang = \curvecurv \nor, \quad \conx_{\tang} \nor = -\curvecurv \tang
\]
with $\tang$ the unit tangent to $\gamma$ and $\nor$ the interior pointing normal.

Let $s = s_t$ denote the arc-length parameter of $\gamma_t$. The commutator of $\pd{s}$ and $\pd{t}$ is
\begin{equation}
\label{eq:commutator}
\left[\pd{t}, \pd{s}\right] = -\curvecurv^2 \pd{s}.
\end{equation}

Under the curve shortening flow on $\sphere^2$, the curvature evolves according to
\begin{equation}
\label{eq:curvature_evolution}
\curvecurv_t = \curvecurv_{ss} + \curvecurv^3 + \curvecurv
\end{equation}
where subscripts denote partial derivatives. The maximum principle now ensures that if $\curvecurv > 0$ at some time $t_0$, then this holds for all $t\geq t_0$. 

Lastly, the element of arc-length $ds$ evolves according to
\begin{equation}
\label{eq:arclength_evolution}
\pd{t} ds = -\curvecurv^2 ds.
\end{equation}
\subsection{Aleksandrov reflection}
\label{sec-2-3}

In section 5 we will make use of an Aleksandrov reflection argument, so give the preliminaries here. Embed $\sphere^2$ in $\RR^3$ via the standard embedding, and let $E$ denote the equator $\{z = 0\}$. 

The argument rests on a "tilted" Aleksandrov reflection. Let $\reflectionvector$ be a vector in $\RR^3$ such that $\ip{\reflectionvector}{\vec{e}_z} < 0$ where $\vec{e}_z = (0,0,1)$, and let $\reflectionplane$ be the plane through the origin with normal vector $\reflectionvector$. Then $\reflectionplane$ intersects $E$ in two antipodal points. Let $\delta(V) \in (0, \pi/2)$ be the angle between $\reflectionvector$ and the plane $\{z=0\}$. Notice that for each fixed $\delta$, the set of $\reflectionvector$ with $\delta(\reflectionvector)=\delta$ is a compact set parameterised by $\sphere^1$ acting as rotations about the $z$-axis. We consider the Aleksandrov reflection across the plane $P$,
\[
\reflectionmap(X) = X - 2\ip{X}{V}V
\]
which is an isometry of $\RR^3$ preserving $\sphere^2$, hence is also an isometry of $\sphere^2$. See figure \ref{fig:tilted_reflection}.

\begin{figure}[htb]
\centering
\includegraphics[width=.9\linewidth]{./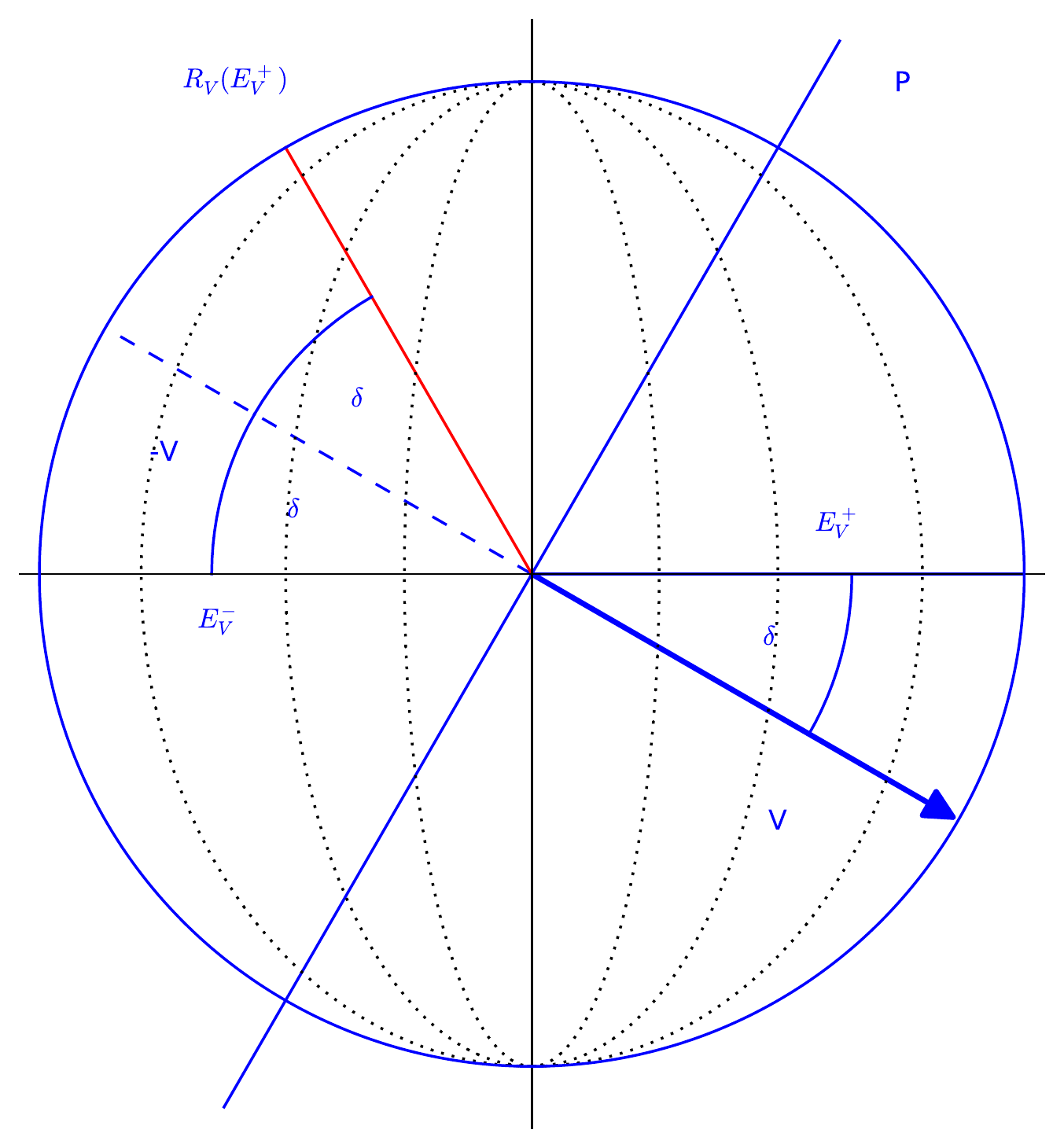}
\caption{\label{fig:tilted_reflection}Reflection in the $(\vec{e_z}, V)$-plane showing the reflected equator and geodesics through the north pole (dotted lines).}
\end{figure}

Let $\reflectionhalfspace^+ = \{X \in \RR^3: \ip{X}{\reflectionvector}>0\}$ be the open half space lying on the side of $\reflectionplane$ into which $\reflectionvector$ points, and $\reflectionhalfspace^- = \{X \in \RR^3: \ip{X}{\reflectionvector} < 0\}$ be the open half space on the other side of $\reflectionplane$. For any set $S\subset \RR^3$, let $\reflectionset{S}^{\pm} = S \intersect \reflectionhalfspace^{\pm}$. In particular $\reflectionset{(\sphere^2)}^{\pm}$ are hemispheres with boundary equator equal to $\reflectionplane \intersect \sphere^2$. Notice that $\reflectionmap$ takes $\reflectionhalfspace^{\pm}$ to $\reflectionhalfspace^{\mp}$.

Next we will need the nearest point projection $\pi(X)$ to the equator $E$. For a point $X\in\sphere^2$, let $\rho(X) = d(X, E)$ denote the distance from $X$ to $E$ and let $\pi(X)$ denote the set of nearest points on $E$ to $X$. If $X$ is not either pole $(0,0,1), (0,0,-1)$, then $\pi(X)$ is a single point. Otherwise, $\pi(X) = E$. For each point $y\in \pi(X)$, there is a unique, length minimising geodesic joining $X$ to $y$ with length equal to $\rho(X)$. By the first variation formula, this geodesic meets $E$ orthogonally, hence lies on the unique great circle passing through $X$ and the north pole $(0,0,1)$. The dotted lines in figure \ref{fig:tilted_reflection} show some geodesics passing through the north pole.

For any two curves $\alpha,\beta$ on $\sphere^2$, and any $X \in E$, let us write $\alpha \geq_X \beta$ (resp. $\alpha >_X \beta$) if
\[
\inf \{\rho(Y) : Y \in \pi^{-1}(X) \intersect \alpha\} \geq (\text{resp. } >) \> \sup \{\rho(Y) : Y \in \pi^{-1}(X) \intersect \beta\}
\]
whenever both sets are non-empty. The $\inf$ and $\sup$ are required since $\alpha$ and $\beta$ need not be graphs over the equator and so the fibres $\pi^{-1}(X) \intersect \alpha$ and $\pi^{-1}(X) \intersect \beta$ may have multiple points. Loosely speaking, we say $\alpha$ lies above $\beta$ over the point $X$ in the equator $\{z=0\}$. We will also write $\alpha \geq (\text{resp. } >) \> \beta$ if $\alpha \geq_X (\text{resp. } >_X) \> \beta$ for every $X \in E$. Notice in particular that we require strict inequality to hold for \emph{every} $X$.

\begin{remark}
\label{rem:partial_order}
The relations $\leq_X$ and $\leq$ are not partial orders in general since they are not reflexive. In fact, $\alpha \leq_X \alpha$ if and only if the fibre $\pi^{-1} (X)$ intersects $\alpha$ in a single point. The relation $\leq$ is only a partial order when restricted to curves that are graphs over the equator: $\alpha \leq \alpha$ if and only if $\alpha$ is a graph over the equator. 
\end{remark}

Using polar coordinates as above, we may also rewrite $\alpha \geq_X \beta$ if and only if $\theta(X) \in \{\theta(\alpha)\} \intersect \{\theta(\beta)\}$ and
\[
\inf\{\phi(\alpha) : \theta(\alpha) = \theta(X)\} \geq \sup\{\phi(\beta) : \theta(\beta) = \theta(X)\}.
\]
That is, $\alpha \geq_X \beta$ if and only if there is at least one point on $\alpha$ and at least one point on $\beta$ with azimuthal angle $\theta$ equal to the azimuthal angle of $X$ and so that the smallest polar angle $\phi$ of $\alpha$ is greater than or equal to the greatest polar angle of $\beta$.
\section{Harnack Inequality and Curvature Estimates}
\label{sec-3}

Just as for the curve shortening flow in the plane, there is a Harnack inequality for the curve shortening flow on $\sphere^2$. This is the fundamental result of this section, from which everything else follows.

\begin{theorem}
[Harnack Inequality]
\label{thm:harnack}
For any immersed solution to the curve shortening flow defined on the time interval $[-\alpha, 0)$ and  with $\curvecurv > 0$, we have
\[
(\log k)_{ss} + k^2 + \frac{1}{2(t-\alpha)} \geq 0.
\]
\end{theorem}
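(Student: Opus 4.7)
The plan is to apply the parabolic maximum principle to a Hamilton-style Harnack quantity. With $L := \log k$ (well-defined since $k > 0$), I take
\[
Q := L_{ss} + k^2 + \frac{1}{2(t+\alpha)},
\]
which blows up at the initial time $t = -\alpha$ and whose non-negativity on $[-\alpha, 0) \times \sphere^1$ is exactly the claim (I am reading the $t-\alpha$ in the statement as $t+\alpha$, so that the denominator vanishes at the left endpoint of the time interval). The strategy has three steps: derive the evolution of the auxiliary quantity $P := L_{ss} + k^2$, incorporate the time shift $\sigma := \tfrac{1}{2(t+\alpha)}$ so that the nonlinear reaction is centred on $Q$, and close via the maximum principle.

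Dividing \eqref{eq:curvature_evolution} by $k$ and using $k_{ss}/k = L_{ss} + L_s^2$ gives $L_t = L_{ss} + L_s^2 + k^2 + 1$. Differentiating twice in $s$, using the commutator $[\partial_t,\partial_s] = -k^2\partial_s$ from \eqref{eq:commutator}, and simplifying via the identity $k_s = k L_s$, I would derive the evolution of $P$ in the form
\[
P_t = P_{ss} + 2 L_s P_s + 2 P^2 + R,
\]
where $R$ collects lower-order contributions in $k$, $k_s$, $L_s$, $L_{ss}$. Since $\sigma$ satisfies the ODE $\sigma_t = -2\sigma^2$, substituting $P = Q - \sigma$ expands $2P^2$ to $2Q^2 - 4\sigma Q + 2\sigma^2$, and the $+2\sigma^2$ is cancelled exactly by the $\sigma_t$ contribution to $Q_t$. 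The net outcome is an evolution inequality of the shape
\[
Q_t \geq Q_{ss} + 2 L_s Q_s + 2 Q^2 - 4 \sigma Q + \widetilde{R},
\]
with $\widetilde R$ the reorganised remainder of $R$.

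The maximum principle then closes the argument: since $Q \to +\infty$ uniformly as $t \to -\alpha^+$ and each $\gamma_t$ is compact, if $Q$ were ever to become negative it would first attain the value $0$ at some $(x_0, t_0)$ with $t_0 \in (-\alpha, 0)$, at which point $Q_s = 0$, $Q_{ss} \geq 0$, and $Q_t \leq 0$, forcing $\widetilde R \leq 0$. The spatial critical-point conditions $Q = 0$ and $Q_s = 0$ (the latter equivalent to $L_{sss} = -2 k k_s$) should then be used to show that $\widetilde R > 0$ at $(x_0,t_0)$, giving the contradiction. The main obstacle is precisely this last verification: in the planar Hamilton Harnack the analogous $\widetilde R$ vanishes identically, whereas on $\sphere^2$ the extra $+k$ term in \eqref{eq:curvature_evolution} (reflecting the ambient Gauss curvature of the sphere) produces additional $k^2$- and $k^4$-type pieces in $R$ that must be reorganised into a manifestly non-negative combination, possibly by completing squares with $k_s = k L_s$ or by adding a controlled multiple of $L_s^2$ to the Harnack quantity. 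Once this algebraic rearrangement is in place, the maximum principle argument is standard.
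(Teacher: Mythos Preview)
Your approach is essentially the paper's, and your reading of the time shift is correct (the barrier must blow up at the left endpoint of the interval). The only gap is that you leave the sign of the remainder $R$ as an ``obstacle'' to be resolved later, when in fact a direct computation dissolves it: with $P = (\log k)_{ss} + k^2$ one finds
\[
P_t \;=\; P_{ss} + \frac{2k_s}{k}\,P_s + 2P^2 + 2k^2,
\]
so $R = 2k^2 > 0$ identically. No completion of squares and no modification of the Harnack quantity is needed. The extra $+k$ in the curvature evolution \eqref{eq:curvature_evolution} coming from the ambient Gauss curvature contributes exactly this favourable $+2k^2$ (in the plane $R \equiv 0$), so the sphere case is \emph{easier}, not harder, than you anticipated. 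With $R = 2k^2$ your first-zero argument closes immediately: at a first zero of $Q$ one has $Q_t \leq 0$, $Q_s = 0$, $Q_{ss} \geq 0$, and $-4\sigma Q = 0$, forcing $0 \geq 2k^2 > 0$.

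The paper packages the same computation slightly differently: rather than folding the barrier $\sigma$ into $Q$ and running a first-zero argument, it drops the $+2k^2$ to obtain $P_t \geq P_{ss} + (2k_s/k)P_s + 2P^2$ and then compares $P$ directly with the ODE solution $q(t) = -1/(2(t-\alpha))$ of $q' = 2q^2$, $\lim_{t\to\alpha^+} q = -\infty$. This ODE comparison is equivalent to your barrier incorporation.
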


\begin{proof}
From the evolution of the curvature in equation \eqref{eq:curvature_evolution} and the commutator equation \eqref{eq:commutator}, we deduce
\begin{align*}
k_{st} &= k_{ts} + k^2k_s = (k_{sss} + 3k^2k_s + k_s) + k^2k_s\\
&= k_{sss} + 4k^2k_s + k_s \\
k_{sst} &= k_{tss} + 2kk_s^2 + 2k^2k_{ss} = (k_{ssss} + 3k^2k_{ss} + 6kk_s^2 + k_{ss}) + 2kk_s^2 + 2k^2k_{ss} \\
&= k_{ssss} + 5k^2k_{ss} + 8kk_s^2 + k_{ss}.
\end{align*}

Let $Q$ be the quantity
\[
Q = (\log k)_{ss} + k^2.
\]

Computing the time derivative, we get
\begin{align*}
Q_t &= -\frac{k_{ss}}{k^2}k_t + \frac{k_{sst}}{k} + 2k^{-3}k_t k_s^2 - k^{-2}(2k_sk_{st}) + 2kk_t \\
&= -\frac{k_{ss}^2}{k^2} + 6kk_{ss} + \frac{k_{ssss}}{k} + 2k_s^2 + \frac{2k_s^2k_{ss}}{k^3} - \frac{2k_sk_{sss}}{k^2} + 2k^4 + 2k^2 \\
&= Q_{ss} + \left(\frac{2k_s}{k}\right)Q_s +2Q^2 +2k^2 \\
&\ge Q_{ss} + \left(\frac{2k_s}{k}\right)Q_s + 2Q^2.
\end{align*}

An ODE comparison with $q(t) = -1/2(t-\alpha)$ which satisfies $q_t = 2q^2$ and $\lim_{t\to\alpha} q(t) = -\infty$ shows that $Q(s,t) \geq q(t)$ completing the result.
\end{proof}

\begin{cor}
\label{cor:curvature_time_increasing}
For an ancient solution $\gamma_t$, with $\curvecurv > 0$, we have
\[
\curvecurv_t \geq 0.
\]
\end{cor}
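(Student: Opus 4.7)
The plan is to take the Harnack inequality of Theorem \ref{thm:harnack} and send the initial time to $-\infty$, then algebraically relate the resulting inequality to the evolution equation \eqref{eq:curvature_evolution} for $\curvecurv$.

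First, since $\gamma_t$ is ancient, for any fixed $t_0 \in (-\infty, 0)$ and any $\alpha > -t_0$, the solution exists on $[-\alpha, 0)$ with $\curvecurv > 0$, so Theorem \ref{thm:harnack} applies and gives
\[
(\log \curvecurv)_{ss}(s, t_0) + \curvecurv^2(s, t_0) + \frac{1}{2(t_0 - \alpha)} \geq 0.
\]
Letting $\alpha \to \infty$ with $(s, t_0)$ fixed, the last term vanishes, yielding the cleaner ancient Harnack inequality
\[
(\log \curvecurv)_{ss} + \curvecurv^2 \geq 0
\]
pointwise on $\gamma_t$ for every $t \in (-\infty, 0)$.

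Next I would rewrite the evolution equation \eqref{eq:curvature_evolution} so that the left hand side of the above inequality appears on the right. Since $(\log \curvecurv)_{ss} = \curvecurv_{ss}/\curvecurv - \curvecurv_s^2/\curvecurv^2$, we have
\[
\curvecurv_t = \curvecurv_{ss} + \curvecurv^3 + \curvecurv = \curvecurv\bigl[(\log \curvecurv)_{ss} + \curvecurv^2\bigr] + \frac{\curvecurv_s^2}{\curvecurv} + \curvecurv.
\]
Using the ancient Harnack inequality together with $\curvecurv > 0$, every term on the right is non-negative, and in fact we obtain the stronger bound $\curvecurv_t \geq \curvecurv_s^2/\curvecurv + \curvecurv > 0$, which implies the corollary.

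There is no real obstacle here: the only subtlety is verifying that one may legitimately take $\alpha \to \infty$ in the Harnack inequality, which is immediate because Theorem \ref{thm:harnack} holds for \emph{every} admissible $\alpha$ and the dependence on $\alpha$ enters only through the term $1/(2(t-\alpha))$, which converges monotonically to $0$. Everything else is the short algebraic identity above.
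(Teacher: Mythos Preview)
Your proof is correct and follows essentially the same route as the paper: apply the Harnack inequality of Theorem~\ref{thm:harnack}, let the initial time go to $-\infty$ so that the $\frac{1}{2(t-\alpha)}$ term vanishes, and then use the identity $\curvecurv_t = \curvecurv\bigl[(\log\curvecurv)_{ss} + \curvecurv^2\bigr] + \curvecurv_s^2/\curvecurv + \curvecurv$ coming from \eqref{eq:curvature_evolution}. The only cosmetic difference is that you pass to the limit first and then do the algebra, whereas the paper carries the $\alpha$-dependent term through the algebraic manipulation and takes the limit at the end; your ordering is arguably cleaner and even yields the strict inequality $\curvecurv_t > 0$.
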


\begin{proof}
As $\gamma_t$ is an ancient solution with $\curvecurv>0$, the Harnack inequality holds for any $\alpha < 0$, supplying us with
\begin{align*}
0 &\leq (\log k)_{ss} + k^2 + \frac{1}{2(1-\alpha)} \\
&= \frac{\curvecurv_{ss}}{\curvecurv} - \frac{\curvecurv_s^2}{\curvecurv} + \curvecurv^2 + \frac{1}{2(1-\alpha)} \\
& \leq \frac{\curvecurv_{ss} + \curvecurv^3 + \curvecurv}{\curvecurv} + \frac{1}{2(1-\alpha)} \\
&= \frac{\curvecurv_t}{\curvecurv} + \frac{1}{2(1-\alpha)} \\
\end{align*}
for $t \in [\alpha, 0)$. Taking the limit $\alpha \to -\infty$ gives the result.
\end{proof}

We are now able to obtain a curvature bound, and by standard bootstrapping arguments, we also obtain higher derivative bounds.

\begin{cor}
\label{cor:derivative_bounds}
For any $t_0 < 0$ and any integer $j\geq 0$, there exists a constant $C_j(t_0)$ such that
\[
\abs{\curvecurv^{(j)}} \leq C_j(t_0)
\]
on $(-\infty, t_0)$.
\end{cor}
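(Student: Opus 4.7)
The strategy is to handle $j = 0$ directly from the monotonicity $\curvecurv_t \geq 0$ of Corollary \ref{cor:curvature_time_increasing}, then bootstrap to all higher derivatives by a Hamilton--Bernstein maximum principle on a sliding backward time window of fixed length. For the base case, pick any $t_1 \in (t_0, 0)$. Since $\curvecurv_t \geq 0$ pointwise, one has $\curvecurv(s, t) \leq \curvecurv(s, t_1) \leq M$ for all $t \leq t_1$, where $M := \max_{\gamma_{t_1}} \curvecurv < \infty$. This uniform $C^0$ bound on $(-\infty, t_0]$ gives $C_0(t_0) = M$.

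For the gradient bound, I would differentiate \eqref{eq:curvature_evolution} using the commutator \eqref{eq:commutator} to derive
\[
(\curvecurv_s^2)_t - (\curvecurv_s^2)_{ss} = -2\curvecurv_{ss}^2 + (4\curvecurv^2 + 2)\curvecurv_s^2, \qquad (\curvecurv^2)_t - (\curvecurv^2)_{ss} = -2\curvecurv_s^2 + 2\curvecurv^4 + 2\curvecurv^2,
\]
and, for an arbitrary target $t_2 \leq t_0$, work on the window $[t_2 - 1, t_2] \subset (-\infty, t_0]$ on which $\curvecurv \leq M$. Form the Hamilton-type quantity
\[
G(s,t) = \bigl(t - (t_2 - 1)\bigr)\, \curvecurv_s^2 + A\, \curvecurv^2
\]
with $A = 2M^2 + 2$; the $-2A\curvecurv_s^2$ contribution from $A(\curvecurv^2)_t$ then absorbs the bad $\curvecurv_s^2$ terms arising from the prefactor derivative and from $(\curvecurv_s^2)_t$, yielding $G_t - G_{ss} \leq C(M)$ throughout the window. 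Since $G|_{t = t_2 - 1} \leq AM^2$, the maximum principle gives $G \leq AM^2 + C(M)$ at time $t_2$, hence $\curvecurv_s^2(\cdot, t_2) \leq AM^2 + C(M) =: C_1$, a bound uniform in $t_2 \leq t_0$.

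Higher derivatives follow by iterating the sliding-window Bernstein argument: once $\curvecurv, \curvecurv_s, \ldots, \curvecurv^{(j)}$ are all uniformly bounded on $(-\infty, t_0]$, the analogous quantity $G_j = (t - (t_2 - 1))(\curvecurv^{(j+1)})^2 + A_j (\curvecurv^{(j)})^2$ satisfies a reaction--diffusion inequality whose lower-order coefficients are all already controlled, and the same maximum principle step closes the induction. Equivalently, once the $C^0$ bound is in hand, \eqref{eq:curvature_evolution} is a uniformly parabolic scalar PDE with bounded reaction term, and standard interior parabolic Schauder estimates applied on fixed-length windows deliver all higher-order bounds at once. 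The single genuine obstacle is the absence of any backward-in-time initial datum: a naive maximum principle on $\curvecurv_s^2$ alone produces only an exponentially-growing-in-time inequality that diverges as $t \to -\infty$. Localizing to a sliding window of fixed length is precisely the device that converts this into a local-in-time estimate whose constants depend only on $M$ and the window length, hence are uniform over $t_2 \leq t_0$.
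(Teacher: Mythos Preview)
Your approach is essentially identical to the paper's: the $j=0$ bound comes directly from the monotonicity $\curvecurv_t\geq 0$, and the higher derivatives are obtained by applying the maximum principle to the Hamilton--Bernstein quantity $(t-(t_0-1))\curvecurv_s^2 + C_0(t_0)\curvecurv^2$ on a unit-length backward window (the paper states this tersely, you spell out the sliding-window mechanism explicitly). One minor slip: your evolution for $\curvecurv_s^2$ should read $(\curvecurv_s^2)_t-(\curvecurv_s^2)_{ss}=-2\curvecurv_{ss}^2+(8\curvecurv^2+2)\curvecurv_s^2$, not $4\curvecurv^2+2$, so the absorbing constant $A$ must be taken a bit larger, but the argument is otherwise fine.
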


\begin{proof}
By Corollary \ref{cor:curvature_time_increasing}, $\curvecurv$ is increasing in $t$, and since $\curvecurv > 0$, we may take $C_0(t_0) = \sup \{\curvecurv(x,t_0) : x\in \sphere^1\}$. 

The higher derivative estimates follow by standard bootstrapping arguments similar to those described in \cite{MR1375255}. For example, we obtain $C_1(t_0)$ from the evolution equation $(k_s)_t = k_{sss} + 4k^2k_s + k_s$ by applying the maximum principle to the evolution of $(t-(t_0-1))k_s^2 + C_0(t_0) k^2$ and using the fact that $\abs{\curvecurv} \leq C_0(t_0)$.
\end{proof}
\section{Backwards Convergence}
\label{sec-4}

Armed with the curvature bounds, we can prove that any ancient solution $\gamma_t$ converges smoothly to an equator as $t\to-\infty$. We begin with a lemma.

\begin{lemma}
\label{lem:intcurvetozero}
Let $\gamma_t$ be an ancient solution to the curve shortening flow. Then
\begin{align*}
\lim_{t\to -\infty} \int_{\gamma_t} k ds = 0
\end{align*}
exponentially fast.
\end{lemma}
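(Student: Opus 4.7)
The plan is to show that $I(t) := \int_{\gamma_t} \curvecurv \, ds$ satisfies the linear ODE $I'(t) = I(t)$, from which $I(t) = I(t_0)e^{t-t_0}$ and hence exponential decay as $t \to -\infty$ follow immediately.

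To derive the ODE directly, I would differentiate $I(t)$ under the integral using $\curvecurv_t = \curvecurv_{ss} + \curvecurv^3 + \curvecurv$ from \eqref{eq:curvature_evolution} and $\partial_t(ds) = -\curvecurv^2\, ds$ from \eqref{eq:arclength_evolution}. The two $\curvecurv^3$ terms cancel and $\int \curvecurv_{ss}\, ds = 0$ on a closed curve, leaving $I'(t) = \int_{\gamma_t} \curvecurv\, ds = I(t)$.

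Alternatively, via the Gauss-Bonnet route flagged in the introduction: on the topological disk $\interior{\Omega}_t$ with Gaussian curvature $K \equiv 1$, Gauss-Bonnet reads $I(t) = 2\pi - A(t)$, where $A(t)$ is the interior area. The first variation of area under the flow then yields $A'(t) = \pm I(t)$, and selecting the sign consistent with the shrinking nature of curve shortening recovers $I' = I$ and furnishes the \emph{a priori} bound $0 \leq I \leq 2\pi$ for free.

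No substantive obstacle arises: the integration by parts for $\curvecurv_{ss}$ is trivial on a closed curve, the ODE is solved by inspection, and exponential decay as $t \to -\infty$ is automatic. The one point requiring care in the Gauss-Bonnet derivation is matching sign conventions, which the direct evolution-equation approach sidesteps entirely.
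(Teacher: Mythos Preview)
Your proposal is correct and matches the paper's proof essentially line for line: differentiate $\int_{\gamma_t}\curvecurv\,ds$ using \eqref{eq:curvature_evolution} and \eqref{eq:arclength_evolution}, observe the $\curvecurv^3$ cancellation and $\int\curvecurv_{ss}\,ds=0$, and obtain the linear ODE $I'=I$. The only cosmetic difference is that the paper rewrites the ODE in terms of the enclosed area $A=2\pi-I$ via Gauss--Bonnet and solves $A_t=A-2\pi$ explicitly, whereas you solve for $I$ directly; the content is identical.
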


\begin{proof}
By the Gauss-Bonnet theorem we have
\[
\int_{\gamma_t} \curvecurv ds = 2\pi - A
\]
where $A$ is the area of $\interior{\Omega_t}$. Recalling that $\curvecurv_t = \curvecurv_{ss} + \curvecurv^3 + \curvecurv$ and that $(ds)_t = - \curvecurv^2 ds$, we obtain
\[
\pd{t} \int_{\gamma_t} \curvecurv ds = \int_{\gamma_t} \curvecurv_{ss} + \curvecurv ds = \int_{\gamma_t} \curvecurv ds = 2\pi - A.
\]
Therefore
\[
A_t = A - 2\pi,
\]
and hence
\begin{equation}
\label{eq:At}
A = 2\pi[1 - (1 - A(0)/2\pi)e^t].
\end{equation}
Then letting $t\to-\infty$, we find that $A(t) \to 2\pi$ exponentially fast and the Gauss-Bonnet formula implies that
\[
\int_{\gamma_t} \curvecurv ds \to 0
\]
exponentially fast.
\end{proof}

Combining our estimates so far, we now obtain the following important proposition:

\begin{prop}
\label{prop:dk_curv_tozero}
Let $\gamma_t$ be an ancient solution to the curve shortening flow. Then for every integer $j\geq 0$, we have
\[
\max_{s \in \sphere^1} \abs{\curvecurv^{(j)}} (s, t) \to 0
\]
as $t \to -\infty$.
\end{prop}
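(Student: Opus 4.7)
The plan is to establish the case $j=0$ first, by combining the uniform $C^1$-in-arc-length bound on $\curvecurv$ from Corollary~\ref{cor:derivative_bounds} with the integral decay from Lemma~\ref{lem:intcurvetozero}, and then to deduce the higher-$j$ cases via Gagliardo--Nirenberg interpolation along each $\gamma_t$.

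For $j=0$, fix $t_0 < 0$, write $C_1 := C_1(t_0)$ from Corollary~\ref{cor:derivative_bounds} so that $|\curvecurv_s| \leq C_1$ uniformly on $(-\infty, t_0]$, and observe from \eqref{eq:arclength_evolution} that $L(\gamma_t)$ is non-increasing in $t$, whence $L(\gamma_t) \geq L_0 := L(\gamma_{t_0}) > 0$ for all $t \leq t_0$. Suppose for contradiction that $\max_s \curvecurv(s, t) \not\to 0$; then there are $M > 0$, a sequence $t_k \to -\infty$, and points $p_k \in \gamma_{t_k}$ with $\curvecurv(p_k, t_k) \geq M$. The Lipschitz bound in arc-length forces $\curvecurv(\cdot, t_k) \geq M/2$ on the arc of length $r := \min(M/(2C_1), L_0/2)$ centered at $p_k$, so
\[
\int_{\gamma_{t_k}} \curvecurv\,ds \geq (M/2)\cdot 2r = Mr > 0
\]
independently of $k$, contradicting Lemma~\ref{lem:intcurvetozero}.

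For $j \geq 1$, pick any integer $N > j$ and apply a Gagliardo--Nirenberg interpolation on the circle $\gamma_t$ with its arc-length metric. A rescaling to a fixed unit-length circle shows the inequality is scale-invariant, so there is a constant $C_{j,N}$ independent of $t$ with
\[
\max_s \abs{\curvecurv^{(j)}}(s,t) \leq C_{j,N}\, \bigl(\max_s \curvecurv(s,t)\bigr)^{1 - j/N}\, \bigl(\max_s \abs{\curvecurv^{(N)}}(s,t)\bigr)^{j/N}.
\]
For $t \leq t_0$ the last factor is bounded by $C_N(t_0)^{j/N}$ by Corollary~\ref{cor:derivative_bounds}, and the first factor tends to zero by the $j=0$ case just established. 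Hence $\max_s \abs{\curvecurv^{(j)}}(s,t) \to 0$ as $t \to -\infty$.

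The main obstacle is ensuring the Gagliardo--Nirenberg constant is genuinely uniform in $t$: since the underlying arc-length circle $\gamma_t$ changes size, one has to check that the rescaling factors $L^j$ and $L^N$ produced by differentiation combine so that both sides of the estimate transform identically, leaving the inequality dimensionless. Once this is verified, the length bound $L(\gamma_t) \in [L_0, 2\pi]$ plays no further role and the constant may be chosen once and for all.
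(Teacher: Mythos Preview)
Your argument for $j=0$ is essentially the paper's own proof: both argue by contradiction, using the uniform bound on $\curvecurv_s$ from Corollary~\ref{cor:derivative_bounds} to produce a definite lower bound on $\int_{\gamma_{t_k}}\curvecurv\,ds$ along a subsequence, contradicting Lemma~\ref{lem:intcurvetozero}. You are in fact slightly more careful than the paper in capping the arc length by $L_0/2$ so that the interval does not wrap around the curve.

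For $j\geq 1$ the two proofs diverge. The paper gestures at a parabolic bootstrapping ``similar to the proof of Corollary~\ref{cor:derivative_bounds}'': one differentiates the evolution equation for $\curvecurv$, forms auxiliary quantities of the type $(t-(t_0-1))(\curvecurv^{(j)})^2 + C(\curvecurv^{(j-1)})^2$, and applies the maximum principle inductively. Your route is purely elliptic/interpolative: having already secured $\max\curvecurv\to 0$ and uniform bounds on $\max|\curvecurv^{(N)}|$, you invoke a Landau--Kolmogorov/Gagliardo--Nirenberg inequality on the circle to squeeze the intermediate derivatives. This is legitimate --- the $L^\infty$ interpolation inequality does hold for periodic functions, and your scaling check confirms the constant is independent of $L(\gamma_t)$ --- and it has the virtue of avoiding any further PDE work once the endpoint estimates are in hand. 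The paper's approach, by contrast, stays within the maximum-principle toolkit already used and could in principle yield explicit decay rates, but as written it is less detailed than yours. Either argument suffices; yours is the cleaner of the two. One small safeguard worth adding: if you prefer not to rely on the pure Landau--Kolmogorov form on $\sphere^1$, including the standard lower-order term $+\,C\max\curvecurv$ costs nothing, since that term also tends to zero (and the length lower bound $L\geq L_0$ controls the constant after rescaling).
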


\begin{proof}
First, let us prove the case $j=0$. We argue by contradiction. Suppose the proposition is false. Then there exists $\epsilon>0$, a sequence $t_i \to -\infty$ and, a sequence $s_i$ such that $\curvecurv(s_i,t_i) \ge \epsilon$ for all $i$. Since we also have $\abs{\curvecurv_s} \le C_1(1)$ for $t\leq 1$, we find that 
\[
\curvecurv(s, t_i) \geq \epsilon -C_1(1)\abs{s-s_i} \geq \epsilon/2\]
for all $s$ such that $|s-s_i| \leq \frac{\epsilon}{2C_1(1)}$. But this implies that for every $i$,
\[
\int_{\gamma_{t_i}} \curvecurv (s,t) ds \geq \int_{|s-s_i| \leq \tfrac{\epsilon}{2C_1(1)}} \curvecurv(s,t)ds \geq \frac{\epsilon^2}{4C_1(1)}
\]
contradicting the fact that $\int_{\gamma_t} \curvecurv \to 0$ as $t\to -\infty$ by lemma \ref{lem:intcurvetozero}.

The result for $j>0$ follows by a bootstrapping argument similar to the proof of Corollary \ref{cor:derivative_bounds}.
\end{proof}

Now we have all the ingredients to prove that the backwards limit is an equator. First we have sub-sequential convergence.

\begin{lemma}
\label{lem:subsequential_backward_limit}
Let $\gamma_t$ be an ancient, embedded, convex solution to the curve shortening flow on $\sphere^2$. Then there is a sequence $t_k \to -\infty$ with $\gamma_{t_k} \to_{C^{\infty}} \gamma_{-\infty}$ as $k\to\infty$, with $\gamma_{-\infty}$ an equator.
\end{lemma}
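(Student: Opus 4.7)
The approach is to apply Arzelà--Ascoli in $C^\infty$ to extract a subsequential smooth limit, and then identify this limit as an equator using the area formula \eqref{eq:At}.

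First, I would collect the uniform bounds. By the discussion at the end of Section \ref{sec-2-1}, $L(t) := L(\gamma_t) \leq 2\pi$. Corollary \ref{cor:derivative_bounds} and Proposition \ref{prop:dk_curv_tozero} together provide uniform bounds on $\curvecurv$ and all its arc-length derivatives on $(-\infty, t_0]$, with $\sup_s |\curvecurv^{(j)}(s,t)| \to 0$ as $t \to -\infty$. From \eqref{eq:At} we have $A(t) \to 2\pi$, so the isoperimetric inequality on $\sphere^2$, namely $L^2 \geq A(4\pi - A)$, yields $L(t) \to 2\pi$; in particular $L(t)$ is bounded away from $0$ for all sufficiently negative $t$.

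Next, reparametrize on a common domain. For each $t$, choose a base point $p_t \in \gamma_t$ and a unit tangent $T_t \in T_{p_t}\sphere^2$, and set $F_t \colon \sphere^1 = \RR/\ZZ \to \sphere^2 \subset \RR^3$ by $F_t(u) = \gamma_t(u L(t))$, where $\gamma_t$ denotes the arc-length parametrization starting from $p_t$ in direction $T_t$. The Serret--Frenet relations, combined with the uniform bounds on $\curvecurv^{(j)}$ and the two-sided bound on $L(t)$, translate into uniform $C^\infty$ bounds on the family $\{F_t : t \leq t_0\}$ for $t_0$ sufficiently negative. By compactness of the unit tangent bundle of $\sphere^2$, pass to a subsequence $t_k \to -\infty$ along which $(p_{t_k}, T_{t_k}) \to (p_\infty, T_\infty)$; Arzelà--Ascoli then yields a further subsequence with $F_{t_k} \to F_\infty$ in $C^\infty(\sphere^1, \sphere^2)$. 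Let $\gamma_{-\infty}$ denote the image of $F_\infty$.

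Finally, identify the limit. Since $\curvecurv(\cdot, t_k) \to 0$ uniformly and derivatives pass to the limit, $F_\infty$ has vanishing geodesic curvature everywhere, so it parametrizes a closed geodesic of $\sphere^2$, i.e., a great circle traversed some number of times. The parametrization has total speed $\lim L(t_k) = 2\pi$, which is exactly the length of a single traversal of a great circle; hence $F_\infty$ covers the great circle precisely once, and $\gamma_{-\infty}$ is an equator. The only genuine obstacle is non-degeneracy of the limit---a priori, $F_{t_k}$ could collapse to a constant map if $L(t_k) \to 0$---but this is precluded by the isoperimetric argument combined with $A(t_k) \to 2\pi$. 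The remainder is standard Arzelà--Ascoli bookkeeping.
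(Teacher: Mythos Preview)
Your proposal is correct and follows the same overall strategy as the paper: bound $L$ above and below, invoke the uniform $C^\infty$ curvature bounds from Proposition~\ref{prop:dk_curv_tozero}, apply Arzel\`a--Ascoli in a constant-speed parametrization, and identify the zero-curvature limit as an equator.

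There are two small differences worth recording. First, for the lower bound on $L$ the paper simply uses monotonicity: since $L_t = -\int \curvecurv^2\,ds < 0$, the length is nondecreasing backwards in time, so $L(t) \geq L(-1) > 0$ for all $t \leq -1$. You instead invoke the spherical isoperimetric inequality $L^2 \geq A(4\pi - A)$ together with $A(t) \to 2\pi$ from~\eqref{eq:At}, which gives the sharper conclusion $L(t) \to 2\pi$ directly. Second, you use that sharper conclusion to argue that the limiting geodesic is traversed exactly once (total speed $2\pi$ equals the length of a single great circle), explicitly ruling out multiple covers; the paper's proof passes over this point. Both are minor, but your treatment is a bit more careful on these details.
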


\begin{proof}
Since $\pd{t} L = -\int \curvecurv^2 ds < 0$, $L(t)$ is bounded below by $L(-1) > 0$ for all $t\leq-1$. By the paragraph following Proposition \ref{prop:convex_sets}, $L \leq 2\pi$.

Proposition \ref{prop:dk_curv_tozero} shows that the curvature and all derivatives converge to $0$. Since $L$ is bounded, as $t\to -\infty$, $\abs{\gamma'}$ is bounded (say in a constant speed parametrisation) above and away from zero. The Arzela-Ascoli theorem then provides us with a sequence $t_k \to -\infty$ such that $\gamma_{t_k}$ converges smoothly to a closed, immersed curve with zero curvature, i.e. to an equator $\gamma_{-\infty}$.
\end{proof}

Next we show that the limit is unique and that the flow remains in a fixed hemisphere.

\begin{cor}
\label{cor:hemisphere}
The equator $\gamma_{-\infty}$ is unique and $\gamma_t$ lies in one of the hemispheres $H^{\pm}_{-\infty}$ defined by $\gamma_{-\infty}$ for all $t \in (-\infty,0)$. 
\end{cor}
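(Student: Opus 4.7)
The plan is to combine the subsequential convergence of Lemma \ref{lem:subsequential_backward_limit} with an avoidance/intersection-count argument, using the equator $\gamma_{-\infty}$ (stationary since $\curvecurv \equiv 0$ on a great circle) as a barrier. First I would identify the target hemisphere: along $\gamma_{t_k} \to \gamma_{-\infty}$ the interior unit normal $\nor$ converges in $C^\infty$ to a unit normal field on $\gamma_{-\infty}$, which is therefore constant equal to $\pm e_{-\infty}$ for a fixed $e_{-\infty} \in \RR^3$, and continuity selects a single sign. Set $H^{\pm}_{-\infty} = \{X \in \sphere^2 : \pm\ip{X}{e_{-\infty}} > 0\}$, so that $\overline{\interior{\Omega_{t_k}}}$ Hausdorff-converges to $\overline{H^+_{-\infty}}$ by the area identity \eqref{eq:At} combined with the smooth convergence of the boundary.

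Next I would bound and monitor the intersection count $I(t) := |\gamma_t \cap \gamma_{-\infty}|$. Because both $\interior{\Omega_t}$ and each closed hemisphere bounded by $\gamma_{-\infty}$ are geodesically convex subsets of $\sphere^2$, their intersection is convex, hence meets $\gamma_{-\infty}$ in an arc; so $I(t) \in \{0,1,2\}$ unless $\gamma_t = \gamma_{-\infty}$. A short Serret--Frenet computation shows the height $h = \ip{F(\cdot,t)}{e_{-\infty}}$ along the flow satisfies a linear parabolic equation on $\gamma_t$ (with a zeroth-order term), so Angenent's parabolic Sturmian theorem gives $I(t)$ non-increasing in $t$, and the limit $I^* := \lim_{t \to -\infty} I(t) \in \{0,1,2\}$ exists.

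The heart of the proof is then to show $I^* = 0$. Writing $\gamma_{t_k}$ as the graph $\phi = f_k(\theta)$ over $\gamma_{-\infty}$ with $f_k \to 0$ in $C^\infty$, the convexity condition of Proposition \ref{prop:convex_sets}(3) evaluated at any zero $\theta_0$ of $f_k$ yields $f_k''(\theta_0) > 0$; this forbids negative-valued tangential zeros. On any arc where $f_k < 0$, the same condition gives $f_k'' \geq -\sin f_k \cos f_k$, and setting $g = -f_k$ produces the Dirichlet differential inequality $g'' + g \leq 0$ on the arc; the Picone comparison against the first Dirichlet eigenfunction $\sin(\pi(\theta-a)/L)$ then bounds the arc length $L \leq \pi$. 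Combining this with the positive-average constraint $\int_{\sphere^1} \sin f_k\,d\theta > 0$ from $A(t_k) < 2\pi$ with $A(t_k) \to 2\pi$, and analysing the rescaled limit $g_k := f_k / \|f_k\|_{C^0}$ which satisfies $g_\infty'' + g_\infty \geq 0$ and so must be of the tilted-equator form $g_\infty(\theta) = \cos(\theta - \theta_0)$ at leading order, the strict convexity $\curvecurv > 0$ is incompatible with a two-transverse-zero configuration for $k$ large. Monotonicity of $I$ then propagates $I(t_k) = 0$ to $I(t) \equiv 0$ via $I(t) \leq I(t_k)$ for $t \geq t_k$ and the fact that $t_k \to -\infty$.

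With $I \equiv 0$ the curve $\gamma_t$ is disjoint from $\gamma_{-\infty}$ for every $t$, and continuity in $t$ together with the Jordan--Brouwer decomposition of $\sphere^2 \setminus \gamma_{-\infty}$ places $\gamma_t \subset \overline{H^+_{-\infty}}$, the sign being fixed by the convention on $e_{-\infty}$. Uniqueness of $\gamma_{-\infty}$ is then immediate: any other subsequential backward limit $\gamma'_{-\infty}$ is an equator contained in $\overline{H^+_{-\infty}}$, and the only great circle lying in a closed hemisphere is its boundary, so $\gamma'_{-\infty} = \gamma_{-\infty}$. The main obstacle I anticipate is the sign-ruling in the third paragraph: the Sturm monotonicity and the convexity-based at-most-two intersections together still permit $I^* \in \{1,2\}$, and the real work is in excluding the borderline tilted-equator limit configuration by coupling the strict convexity inequality carefully with the $C^\infty$ smallness of $f_k$.
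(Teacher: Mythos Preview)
Your route via intersection counting and Sturmian monotonicity is far more elaborate than what is needed, and the step you flag as the ``main obstacle'' is in fact a genuine gap. The assertion that strict convexity is incompatible with a two--transverse--zero configuration of $f_k$ for $k$ large is not true at the level of single curves: take $f(\theta) = \epsilon\cos\theta + \epsilon^2$ for small $\epsilon>0$. Then $f'' + 2\tan(f)(f')^2 + \cos f\sin f = \epsilon^2 + O(\epsilon^3) > 0$, so the graph is strictly convex, it encloses area strictly less than $2\pi$ on the correct side, yet $f$ vanishes at two points near $\theta = \pm\pi/2$. Your rescaled limit $g_\infty$ in this example is exactly $\cos\theta$, so the inequality $g_\infty'' + g_\infty \geq 0$ does not by itself exclude the tilted--equator profile. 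To rule out $I^* = 2$ you would need to invoke something specific to the \emph{ancient solution}, not just to a single convex curve near the equator; your sketch does not indicate what that extra input would be.

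The paper's argument bypasses all of this with one elementary observation you do not use: since $\curvecurv > 0$ and $\nor$ is interior--pointing, the flow moves strictly inward, so the enclosed regions are \emph{nested}, $\interior{\Omega_{t_1}} \subsetneq \interior{\Omega_{t_2}}$ whenever $t_2 < t_1$. Combined with the subsequential convergence $\gamma_{t_k} \to \gamma_{-\infty}$ and $A(t_k) \to 2\pi$, the increasing union $\bigcup_t \interior{\Omega_t}$ is an open hemisphere bounded by $\gamma_{-\infty}$, and every $\interior{\Omega_t}$ sits inside it. Uniqueness of the limit then follows exactly as in your last paragraph. The nesting replaces your entire Sturm--Picone--rescaling program in one line; I would recommend adopting it.
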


\begin{proof}
Since $\curvecurv > 0$, Gauss-Bonnet implies that $A(t) < 2\pi$ for all $t$ and that the curvature vector points inward (recall that the interior $\interior{\Omega_t}$ is the region enclosed by $\gamma_t$ with area less than $2\pi$). Thus $\interior{\Omega_{t_1}} \subsetneq \interior{\Omega_{t_2}}$ whenever $t_2 < t_1$. 

In particular, for our sequence $(t_k)$, $\interior{\Omega_{t_j}} \subsetneq \interior{\Omega_{t_k}}$ for $k>j$. If $\interior{\Omega_{t_j}}$ is not wholly contained in either hemisphere $H_{-\infty}^{\pm}$ for some $j$, then it contains points in both hemispheres, $x_j^{\pm} \in H_{-\infty}^{\pm}$. We obtain a contradiction by choosing $k<j$ with $\gamma_{t_k}$ sufficiently close to $\gamma_{-\infty}$ so that $x_j^+$ lies on the opposite side of $\gamma_{-\infty}$ to $x_j^-$ contradicting $\interior{\Omega_{t_k}} \subset \interior{\Omega_{t_j}}$ has points on both sides of $\gamma_{t_k}$. Thus $\interior{\Omega_{t_k}}$ lies entirely in one or the other hemisphere $H_{-\infty}^{\pm}$ for every $k$.

Now for any $t \in (-\infty,0)$, choose $k$ such that $t_k < t$. Then $\interior{\Omega_{t}} \subsetneq \interior{\Omega_{t_k}}$, the latter lying in a hemisphere. Lastly, suppose there is a sequence $t_k'$ with $\gamma_{t_k'}$ converging to different equator. This equator has points lying in both hemispheres defined by $\gamma_{-\infty}$ and hence $\gamma_{t_k'}$ also has points in both hemispheres for $t_k'$ sufficiently negative, a contradiction.
\end{proof}

\begin{remark}
Any closed, embedded curve on $\sphere^2$ with $\curvecurv \geq 0$ must lie in a closed hemisphere. The result above shows that under the flow, an embedded, convex, ancient solution remains in a fixed hemisphere for all time.
\end{remark}

Now we can extend the sub-sequential convergence to full convergence.

\begin{theorem}
\label{thm:backward_limit}
Let $\gamma_t$ be an ancient, embedded, convex solution to the curve shortening flow on $\sphere^2$. Then $\gamma_{t} \to_{C^{\infty}} \gamma_{-\infty}$ up to diffeomorphism as $t \to -\infty$.
\end{theorem}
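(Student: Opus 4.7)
The plan is to upgrade the sub-sequential convergence from Lemma \ref{lem:subsequential_backward_limit} to full convergence via a soft compactness-and-uniqueness argument: every sub-sequential $C^\infty$ limit must be an equator (by Proposition \ref{prop:dk_curv_tozero}), and by Corollary \ref{cor:hemisphere} that equator must coincide with $\gamma_{-\infty}$. Since every subsequence of times going to $-\infty$ therefore has a sub-subsequence converging to the same limit $\gamma_{-\infty}$, the full backward limit exists.

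Concretely, I would argue by contradiction. Suppose $\gamma_t \not\to_{C^{\infty}} \gamma_{-\infty}$ as $t \to -\infty$. Then there exist an integer $k \geq 0$, a constant $\epsilon > 0$, and a sequence $\tilde{t}_j \to -\infty$ such that, after choosing a constant-speed parametrization for each $\gamma_t$ (which is what makes ``up to diffeomorphism'' precise), the $C^k$-distance between $\gamma_{\tilde{t}_j}$ and $\gamma_{-\infty}$ is at least $\epsilon$ for every $j$. Now I would repeat the Arzela-Ascoli argument of Lemma \ref{lem:subsequential_backward_limit} applied to the sequence $\tilde{t}_j$: monotonicity of length under curve shortening together with the convex-length bound $L(\gamma_t) \leq 2\pi$ yield $L(\gamma_{-1}) \leq L(\gamma_{\tilde{t}_j}) \leq 2\pi$, while the uniform derivative bounds from Corollary \ref{cor:derivative_bounds} combined with the decay from Proposition \ref{prop:dk_curv_tozero} give uniform control on all derivatives of the constant-speed parametrizations. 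Passing to a sub-subsequence, $\gamma_{\tilde{t}_j}$ converges smoothly to a closed immersed curve with zero geodesic curvature, i.e.\ an equator $\gamma'_{-\infty}$. But Corollary \ref{cor:hemisphere} asserts uniqueness of the backward equator, so $\gamma'_{-\infty} = \gamma_{-\infty}$, contradicting the assumption that the $C^k$-distance stayed bounded away from zero.

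The only real obstacle is bookkeeping: making the phrase ``up to diffeomorphism'' compatible with Arzela-Ascoli. A clean way is to fix a continuous choice of base point $p_t \in \gamma_t$ (for example, the nearest point to the north pole of the hemisphere supplied by Corollary \ref{cor:hemisphere}) and take a constant-speed parametrization starting there; the bounds above then guarantee that $|\gamma'_t|$ stays uniformly bounded above and away from zero, and the limit parametrization is a non-degenerate parametrization of $\gamma_{-\infty}$. Once this setup is in place the argument is entirely formal, with the geometric content located in the earlier establishment of the curvature decay and the uniqueness of the backward hemisphere.
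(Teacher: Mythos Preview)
Your argument is correct, and it takes a genuinely different route from the paper. The paper proceeds by layers: it first obtains $C^0$ convergence directly from the monotone nesting $\interior{\Omega_{t_1}} \subsetneq \interior{\Omega_{t_2}}$ for $t_2 < t_1$, so that any $\gamma_t$ with $t < t_k$ is sandwiched between $\gamma_{-\infty}$ and $\gamma_{t_k}$; then it upgrades to $C^1$ convergence using the backward monotonicity of $L(t)$ together with $L(t_k) \to 2\pi$, forcing $L(t) \to 2\pi$; and finally it reads off smooth convergence from the decay of curvature and its derivatives in Proposition \ref{prop:dk_curv_tozero}. You instead package everything into a single soft compactness-plus-uniqueness step: re-run the Arzela--Ascoli argument of Lemma \ref{lem:subsequential_backward_limit} on any putative bad sequence and invoke the uniqueness clause of Corollary \ref{cor:hemisphere} to pin down the limit. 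Both routes rest on the same ingredients (curvature-derivative decay, length bounds, uniqueness of the backward equator); the paper's version is more hands-on and makes the geometric reason for each level of convergence visible, while yours is shorter and avoids the $C^0$/$C^1$/$C^\infty$ stratification entirely. Your remark about fixing a base point to make the ``up to diffeomorphism'' precise is the right way to handle the only delicate point, and once that is done the contradiction is immediate.
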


\begin{proof}
First, we have $C^0$ convergence: for any $\epsilon>0$, choose $t_k$ with $\gamma_{t_k}$ $\epsilon$-close to $\gamma_{-\infty}$ in $C^{0}$ norm. Then for any $t < t_k$, both $\gamma_t$ and $\gamma_{t_k}$ lie in the same hemisphere with $\interior{\Omega_{t_k}} \subsetneq \interior{\Omega_{t}}$. Thus $\gamma_t$ lies between $\gamma_{-\infty}$ and $\gamma_{t_k}$ and so is also $\epsilon$-close to $\gamma_{-\infty}$ in $C^0$ norm. 

$C^1$ convergence follows since the total length $L(t_k) \to 2\pi$ as $k\to\infty$. But also $\inpd[L]{t} = - \int k^2 ds < 0$ so that $L$ is monotone increasing backwards in time hence $L(t) \to 2\pi$ as $t\to-\infty$. But now parametrising $\gamma_t$ on $[0,1]$ with constant speed gives $\abs{\gamma_t'} = L(t) \to 2\pi$ and so $C^1$ convergence up to diffeomorphism follows.

Smooth convergence up to diffeomorphism now follows since the curvature and all derivatives of curvature converge to $0$.
\end{proof}
\section{Ancient solutions are shrinking round circles}
\label{sec-5}
In this section, we prove that ancient, convex solutions to the curve shortening flow are shrinking round circles. 

\begin{lemma}
[Backwards approximate symmetry]
\label{lem:backward_approximate_symmetry}
For any $\delta \in (0,\pi/2)$, there exists a $t_{\delta} \in (-\infty, 0)$ such that for every $V$ with $\delta(V) = \delta$ and all $t\leq t_{\delta}$, we have $\reflectionmap(\reflectionset{(\gamma_t)}^+) \geq \reflectionset{(\gamma_t)}^-$.
\end{lemma}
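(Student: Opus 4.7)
The plan is to exploit the smooth backward convergence $\gamma_t \to \gamma_{-\infty}$ established in Theorem~\ref{thm:backward_limit}, verify the target inequality for the limiting equator directly, and then perturb. After rotating $\sphere^2$, we may assume $\gamma_{-\infty} = E$ and that $\gamma_t$ lies in the closed upper hemisphere for $t$ sufficiently negative. By smooth convergence and Proposition~\ref{prop:convex_sets}, for any integer $k$ and any $\varepsilon > 0$ there exists $t_0 = t_0(k,\varepsilon)$ such that for all $t \leq t_0$, $\gamma_t$ is the graph over $E$ of a smooth function $f_t : E \to \RR$ with $\|f_t\|_{C^k} < \varepsilon$.

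In the limit $\gamma_t = E$, a direct substitution into $\reflectionmap(X) = X - 2\ip{X}{\reflectionvector}\reflectionvector$ shows that $\reflectionmap(E^+)$ is a great semicircle with endpoints $\reflectionplane \intersect E =: \{Y_+, Y_-\}$ lying in $\reflectionhalfspace^-$, whose interior lies strictly above $E$. Consequently $\reflectionmap(E^+) \geq E^-$ in the sense of the lemma, with strict inequality on the interior of the comparison set (those $X \in E$ where both $\pi$-fibres are non-empty) and equality at the two endpoints $Y_\pm$. By compactness of $\{\reflectionvector : \delta(\reflectionvector) = \delta\} \cong \sphere^1$ and continuous dependence of $\reflectionmap$ on $\reflectionvector$, for any small neighbourhood $\mathcal{U} \subset E$ of $\{Y_+, Y_-\}$ there is a uniform constant $c = c(\delta, \mathcal{U}) > 0$ with $\inf\{\rho(Y) : Y \in \pi^{-1}(X) \intersect \reflectionmap(E^+)\} \geq c$ for every $X \in E \setminus \mathcal{U}$ where the fibre is non-empty. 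Combined with the $C^0$-smallness of $f_t$ and the continuous dependence of $\reflectionmap(\reflectionset{(\gamma_t)}^+)$ on $\gamma_t$, this yields $\reflectionmap(\reflectionset{(\gamma_t)}^+) \geq_X \reflectionset{(\gamma_t)}^-$ on $E \setminus \mathcal{U}$ for $t$ sufficiently negative.

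The main obstacle is the inequality near the endpoints $Y_\pm$, where the limit is only equality and $C^0$-smallness is insufficient. Here I would use $C^1$ information. At each fixed point $X^t_\pm \in \reflectionplane \intersect \gamma_t$ (located close to $Y_\pm$ and forming the common endpoint of $\reflectionmap(\reflectionset{(\gamma_t)}^+)$ and $\reflectionset{(\gamma_t)}^-$), the slope of $\reflectionset{(\gamma_t)}^-$ in the polar parametrisation equals $f_t'(\theta(X^t_\pm))$, which tends to $0$, while the slope of $\reflectionmap(\reflectionset{(\gamma_t)}^+)$ converges (for $\delta \in (0, \pi/4)$) to that of $\reflectionmap(E^+)$ at $Y_\pm$, computable in magnitude as $\tan(2\delta)$ and strictly non-zero. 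By $C^1$-convergence and uniformity in $\reflectionvector$, for $t$ sufficiently negative the slope of $\reflectionmap(\reflectionset{(\gamma_t)}^+)$ strictly dominates that of $\reflectionset{(\gamma_t)}^-$ at $X^t_\pm$, so $\reflectionmap(\reflectionset{(\gamma_t)}^+) \geq_X \reflectionset{(\gamma_t)}^-$ on a uniform-in-$\reflectionvector$ neighbourhood of $\pi(X^t_\pm)$. For $\delta \in (\pi/4, \pi/2)$ the two curves depart $X^t_\pm$ in opposite azimuthal directions, so the comparison is vacuous in a neighbourhood of $\pi(X^t_\pm)$; the edge case $\delta = \pi/4$ is handled separately or by continuity. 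Combined with the interior estimate this yields the lemma.
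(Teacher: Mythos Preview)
Your proposal is correct and follows essentially the same route as the paper: write $\gamma_t$ and $\reflectionmap(\gamma_t)$ as graphs over the equator, split into interior points (handled by $C^0$-convergence since $g_{-\infty}>0$ there) and boundary points near $\reflectionplane\cap\gamma_t$ (handled by the $C^1$ slope comparison $g_{-\infty}'(0)=\tan(2\delta)>0$ versus $f_t'\to 0$, made quantitative via a Taylor remainder controlled by the uniform $C^2$ bounds). Your explicit invocation of the compactness of $\{\reflectionvector:\delta(\reflectionvector)=\delta\}$ to secure a single $t_\delta$ valid for all such $\reflectionvector$, and your remark on the range $\delta\in[\pi/4,\pi/2)$, are welcome clarifications that the paper leaves implicit.
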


\begin{proof}

We use polar coordinates as in section \ref{sec:notation}. From Corollary \ref{cor:hemisphere} and Proposition \ref{prop:convex_sets}, we can assume that on $(-\infty, 0)$, $\gamma_t$ lies in the upper hemisphere $\{z>0\}$, written as a graph $\phi = f_t(\theta)$ of a smooth family of positive, smooth functions $f_t [0,2\pi] \to \RR$. Since $\gamma_t$ smoothly converges uniformly to the equator $\{z=0\}$, we have $\npd{\theta}{k} f_t \to 0$ uniformly for each $k\geq 0$. 

Provided that $\delta<\pi/4$, the reflected equator $\reflectionmap(\{z=0\})$ can be written as a graph $(\theta, g_{-\infty}(\theta))$. Since $\reflectionmap$ is an isometry, $\reflectionmap(\gamma_t)$ converges smoothly and uniformly to the reflected equator $\reflectionmap(\{z=0\})$. As the latter is a graph, possibly by choosing $t_0 < 0$ \emph{independently of $\delta$}, we can assume that $\reflectionmap(\gamma_t)$ may be written as a graph $(\theta, g_t(\theta))$ for $t < t_0$ with $g_t \to g_{-\infty}$ smoothly as $t\to-\infty$.

In spherical polar coordinates, for $X,Y \in \sphere^2$ the nearest-point projection is $(\theta(X), \phi(X)) \mapsto (\theta(X), 0)$. If $\theta(X) = \theta(Y)$, the statement $X\geq Y$ is equivalent to $\phi(X) \geq \phi(Y)$. Thus to show that $\reflectionmap(\gamma_t)^+ \geq \gamma_t^-$ it is enough to show that $g_t(\theta) \geq f_t(\theta)$. 

The proof is composed of estimates for \emph{interior points} (i.e. points away from $P \cap \sphere^2$) and for \emph{boundary points} (i.e. points near $P \cap \gamma_t$).

\emph{Interior Points}

For $\delta < \pi/4$, the reflected equator $(\theta, g_{-\infty}(\theta))$, $\theta \in [0,\pi]$ is given by a non-negative, smooth, concave function $g_{-\infty}$ symmetric about $\pi/2$ and strictly positive for $\theta \in (0, \pi)$. Given any $\epsilon \in (0, \pi/2)$, let $G = g_{-\infty} (\epsilon) = g_{-\infty}(\pi-\epsilon)$. Then $G < g_{\infty}(\theta)$ for any $\theta \in (\epsilon, \pi-\epsilon)$. Choose $t_1<t_0$ such that for $\theta \in (\epsilon, \pi-\epsilon)$ and $t<t_1$, we have $f_t(\theta) < G/2$ and $\abs{g_t(\theta) - g_{-\infty}(\theta)} < G/2$. This is possible since $f_t \to 0$ uniformly, and $g_t \to g_{-\infty}$ uniformly. Then, since $g_{-\infty} > G$ on $(\epsilon, \pi - \epsilon)$, for any $\epsilon > 0$, there is a $t_1 = t_1(\epsilon)$ such that $g_t(\theta) > f_t(\theta)$ for $\theta \in (\epsilon, \pi-\epsilon)$ and $t\leq t_1$.

\emph{Boundary Points}

Choose an orientation on $\theta$ so that $P \intersect \{z>0\}$ lies in the region with $\theta \in (-\pi, 0)$. Then recalling that $\gamma_t$ is a graph over the equator, $\gamma_t \intersect P = \{\theta_0(t), \theta_1(t)\}$ with $\theta_0(t) \in (-\pi, 0)$ and $\theta_1(t) \in (\pi, 2\pi)$. Moreover as $t \to -\infty$ we have $\theta_0(t) \to 0$ and $\theta_1(t) \to \pi$. The aim is to show that given $\tilde{\epsilon}>0$, there is a $t_{\tilde{\epsilon}}$ such that $g_t > f_t$ on $(\theta_0(t), \tilde{\epsilon}) \union (\pi-\tilde{\epsilon}, \theta_1(t))$ for every $t<t_{\tilde{\epsilon}}$. It's enough to prove it on $(\theta_0(t), \tilde{\epsilon})$. The proof on $(\pi-\tilde{\epsilon}, \theta_1(t))$ is similar.

We use that $f_t \to 0$, and $g_t \to g_{-\infty}$ smoothly and uniformly, and that $\theta_0(t) \to 0$ as $t\to-\infty$. Notice that $\reflectionmap(\{z=0\})$ lies above the equator $\{z=0\}$ for $\theta \in (0,\pi)$ and lies below for $\theta \in (-\pi,0)$. Thus, $g_{-\infty}$ is odd about $\theta=0$ and increasing near $\theta = 0$ so that $g_{-\infty}'(0) > 0$ (in fact equal to $\tan(2\delta)$) and $g_{-\infty}''(0) = 0$.

Choose $t_1 < t_0$ such that $g_t'(\theta_0(t)) > f_t'(\theta_0(t))$ for all $t<t_1$ which we can do since $f_t' \to 0$ uniformly and $g_t'(\theta_0) \to g_{-\infty}'(0) = \tan(2\delta) > 0$. We also have that $g_t(\theta_0) = f_t(\theta_0)$ since $\theta_0$ is the point about which $f_t$ is reflected across $P$. Expand $g_t$ and $f_t$ in a Taylor series about $\theta_0(t)$ to get that for $\theta > \theta_0$, $g_t > f_t$ if and only if
\[
g_t'(\theta_0) - f_t'(\theta_0) > - \frac{1}{2} (g_t''(c) - f_t''(c)) (\theta - \theta_0)
\]
where $c = c(\theta, t) \in (\theta_0(t), \theta)$. As $t\to -\infty$ the left hand side converges to $\tan(2\delta)$ whilst the right hand side converges to $0$ hence there is a $t_2  = t_2(\tilde{\epsilon}) < t_1$ such that the inequality $g_t - f_t > 0$ is satisfied for any $\theta \in (\theta_0(t), \tilde{\epsilon})$ and $t<t_2$.

\emph{Combined estimates}

To finish the proof, fix any $\epsilon>0$ and use the interior estimates to obtain $g_t > f_t$ for $\theta \in (-\epsilon, \pi-\epsilon)$ and $t < t_1$. Then choose $\tilde{\epsilon} > \epsilon$ to obtain $g_t > f_t$ for $\theta \in (\theta_0(t), \tilde{\epsilon}) \union (\pi-\tilde{\epsilon}, \theta_1(t))$ and $t < t_2$ from the boundary estimates. Then let $t_{\delta} = \min\{t_1, t_2\}$ to get $g_t > f_t$ for all $\theta \in (\theta_0(t), \theta_1(t))$ and all $t<t_{\delta}$.
\end{proof}

\begin{lemma}
[Approximate symmetry preserved]
\label{lem:approximate_symmetry_preserved}
There is a $T \in (-\infty, 0)$ such that $\reflectionmap(\gamma_t)^+ \geq \gamma_t^-$ for $t \in (-\infty, T)$ and all $\delta \in (0,\pi/4)$.
\end{lemma}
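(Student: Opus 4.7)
The plan is to extend the $\delta$-dependent window of Lemma \ref{lem:backward_approximate_symmetry} to a single window $(-\infty, T)$ by propagating the reflection inequality forward in time via parabolic Aleksandrov reflection. Fix $\delta \in (0,\pi/4)$ and $\reflectionvector$ with $\delta(\reflectionvector) = \delta$. For $t$ sufficiently negative, both $\gamma_t$ and $\reflectionmap(\gamma_t)$ are graphs over the equator $E$, written in polar coordinates as $\phi = f_t(\theta)$ and $\phi = g_t(\theta)$ respectively, by smooth backwards convergence (Theorem \ref{thm:backward_limit}). Since $\reflectionmap$ is an isometry of $\sphere^2$, the reflected curve $\reflectionmap(\gamma_t)$ also solves curve shortening flow, so $f_t$ and $g_t$ both satisfy the same quasilinear graph-form parabolic equation. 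Hence $u_t := g_t - f_t$ satisfies a linear parabolic equation on the interval $(\theta_0(t), \theta_1(t))$ cut off by the intersections $\gamma_t \cap \reflectionplane$, at which $u_t$ vanishes identically.

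The next step is to invoke the parabolic strong maximum principle with the Hopf boundary lemma to propagate $u_t \geq 0$ forward in time from $t_\delta$. The initial inequality comes from Lemma \ref{lem:backward_approximate_symmetry}, while the Hopf-type condition at the moving boundary points is precisely the strict derivative separation $\partial_\theta g_t - \partial_\theta f_t \to \tan(2\delta) > 0$ at $\theta_0(t)$ (with an analogous statement at $\theta_1(t)$) that was established in the boundary-point estimate of the previous lemma. This yields $u_t \geq 0$ on $[t_\delta, T']$ whenever both graphical representations persist. I would then choose $T \in (-\infty, 0)$ small enough that $\reflectionmap(\gamma_t)$ remains a graph over $E$ for all $t \leq T$ and all $\delta \in (0,\pi/4)$; this is possible because $\gamma_t \to E$ smoothly and $\reflectionmap(E)$ is itself a smooth graph for every such $\delta$. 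Combining with Lemma \ref{lem:backward_approximate_symmetry} on $(-\infty, t_\delta]$ delivers the inequality uniformly in $\delta$ on the single window $(-\infty, T)$, which is exactly what is needed, since for $\delta$ small the time $t_\delta$ may drift to $-\infty$ and a naive infimum over $\delta$ does not suffice.

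The main obstacle I anticipate is the evolving lateral boundary: since $\theta_0(t)$ and $\theta_1(t)$ depend on $t$, the PDE for $u_t$ lives on a time-varying interval, so one must either pull back to a fixed reference arc by a smooth reparametrisation or apply a maximum principle directly on an evolving domain, and verify that the Hopf lemma remains applicable at the moving endpoints. A secondary concern is uniformity in $\delta$: the separation rate $\tan(2\delta)$ degenerates as $\delta \to 0$, but the threshold $T$ depends only on the persistence of $\reflectionmap(\gamma_t)$ as a graph over $E$, not on this rate, so uniform propagation on $(0,\pi/4)$ follows once graph-persistence is established uniformly, with the behaviour as $\delta \to \pi/4$ requiring some care since $\reflectionmap(E)$ becomes vertical at its crossings of $E$ in that limit.
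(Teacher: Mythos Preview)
Your proposal is correct and follows essentially the same route as the paper: both arguments use that $\reflectionmap(\gamma_t)$ is again a curve shortening flow solution, write the two relevant arcs as graphs over the equator, and propagate the inequality forward from the time $t_\delta$ of Lemma~\ref{lem:backward_approximate_symmetry} via the strong maximum principle together with the Hopf boundary lemma at the moving endpoints on $\reflectionplane$. The paper phrases the comparison in terms of the extrinsic distance $d(x,y,t)$ between points of $\gamma_t^-$ and $\reflectionmap(\gamma_t^+)$ rather than your graph difference $u_t=g_t-f_t$, and it handles uniformity in $\delta$ by first obtaining a $T_\delta$ for each $\delta$ and then arguing that the three ingredients needed for the comparison (graphicality, transversality with $\reflectionplane$, and non-emptiness of the two arcs) are monotone in $\delta$, so that $\inf_\delta T_\delta>-\infty$; your version instead fixes a single $T$ up front by uniform graph persistence and then propagates, which is the same content in a different order. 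Your flagged concerns---the time-varying lateral boundary and the degeneration as $\delta\to\pi/4$---are exactly the technical points the paper also has to address.
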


\begin{proof}
Recall that both the $\gamma_t^-$ and $\reflectionmap(\gamma_t)^+$ may be written as graphs over the equator for $t\in(-\infty, t_0)$. Since both the equator $\gamma_{-\infty}$ and the reflected equator $\reflectionmap(\gamma_{-\infty})$ meet $P$ transversely, $\gamma_t$ smoothly converges to the equator, and $\reflectionmap$ is an isometry, there is a $t_1 = t_1(\delta) \in (-\infty,t_0)$ such that both $\gamma_t^-$ and $\reflectionmap(\gamma_t)^+$ meet $P$ transversely for all $t \in (-\infty, t_1)$. Thus $\gamma_t^-$ and $\reflectionmap(\gamma_t)^+$ are connected curves meeting $P$ transversely in precisely two points for each $t$. 

Now we apply the maximum principle. Since $\reflectionmap$ is an isometry, $\reflectionmap(\gamma_t^+)$ evolves by curve shortening. Since $P\intersect \sphere^2$ is a great circle, it is stationary under the curve shortening flow so we can think of it too evolving by curve shortening. Therefore, as both $\gamma_t^-$ and $\reflectionmap(\gamma_t)^+$ meet $P$ transversely, the maximum principle ensures that both curves do not intersect $P$ at any \emph{other} points, hence remain in $\reflectionset{\sphere^2}^-$ for all $t\in(-\infty, t_1)$. 

The above allows us to set up a maximum principle argument: we have two connected curves $\gamma_t^-, \reflectionmap(\gamma_t^+)$ evolving by curve shortening and they agree at their end points which remain on $P$. By Lemma \ref{lem:backward_approximate_symmetry}, for all $t \in (-\infty, t_{\delta})$ we have $d(x,y,t) > 0$ for any $x\in\gamma_t^-$ and $y\in\reflectionmap(\gamma_t^+)$ away from the end points. We also obtain that at the end points, the angle $\reflectionmap(\gamma_t^+)$ makes with the $\{z=0\}$ plane is strictly bigger that the angle $\gamma_t^-$ makes with the $\{z=0\}$ plane. By the parabolic Hopf boundary point lemma (see e.g. \cite{MR1483984}), this positive lower bound is preserved under the flow and so $d(x,y,t) > 0$ for $(x,y)$ near both end points. Now, in the usual way (e.g. \cite{MR1656553}) a contradiction is obtained if $d(x,y,t) \leq 0$ at some time $t$ for some $(x,y)$ since this must occur at a first time $t>t_{\delta}$ at an interior point $(x,y)$.

This furnishes us with a $T_{\delta}$ for each $\delta$ such that $\reflectionmap(\gamma_t)^+ \geq \gamma_t^-$ for $t \in (-\infty, T_{\delta})$. Let $T = \inf\{T_{\delta}: \delta \in (0,\pi/2)\}$. We need to show that $T>-\infty$. To see this, observe that the above argument is valid provided both $\gamma_t^-$ and $\reflectionmap(\gamma_t)^+$
\begin{enumerate}
\item are graphs over the equator,
\item meet $P$ transversely,
\item are non-empty
\end{enumerate}
for $t \in (-\infty, T)$.

\begin{enumerate}
\item Recall that $\reflectionmap(\gamma_{-\infty})$ is a graph $g_{-\infty}^{\delta}$ for each $\delta\in(0,\pi/4)$ with maximum derivative at the end points $\theta = \{0, \pi\}$. As $\delta \to 0$, the derivative $(g_{-\infty}^{\delta})'(0)$ monotonically decreases to $0$. For each fixed $t$ then $[\reflectionmap(\gamma_t)+]'$ becomes more horizontal as $\delta$ decreases hence if $\reflectionmap(\gamma_t)^+$ is a graph (so does not have a vertical tangent) for some $\delta_0$, then it remains a graph for every $\delta < \delta_0$. Of course whether $\gamma_t^-$ is a graph or not is independent of $\delta$, and by convexity we know that $\gamma_t$ is a graph for all $t \in (-\infty,0)$.

\item The angle $P$ makes with the $\{z=0\}$ plane increases monotonically as $\delta \to 0$. If $\gamma_t^-$ and $\reflectionmap(\gamma_t)^+$ meet $P$ transversely for some $\delta_0$ then they continue to do so for every $\delta<\delta_0$.

\item Provided $\gamma_t^-$ lies below the maximum $\phi$ coordinate of $P \intersect \sphere^2$, both curves $\gamma_t^-$ and $\reflectionmap(\gamma_t)^+$ are non-empty. But now just observe that this $\phi$ monotonically increases to $\pi/2$ as $\delta \to 0$.
\end{enumerate}
\end{proof}

Next we characterise those curves $\alpha$ with maximal approximate symmetry for every $\delta>0$ as round circles.

\begin{prop}
[Exact symmetry]
\label{prop:exact_symmetry}
Let $\alpha \subset (\sphere^2)^+ = \sphere^2 \intersect \{z \geq 0\}$ be a smooth curve. If $\reflectionmap(\reflectionset{\alpha}^+) \geq \reflectionset{\alpha}^-$ for every $\reflectionvector$ such that $\ip{\reflectionvector}{\vec{e}_z} < 0$ and $\delta(\reflectionvector) \in (0,\pi/4)$, then $\alpha$ is a round circle with center the north pole $(0,0,1)$.
\end{prop}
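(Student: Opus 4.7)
The plan is to let $\delta\to 0^+$ in the hypothesis so as to deduce that $\alpha$ is, in a strong sense, symmetric under reflection across every meridional plane through the $z$-axis, which forces $\alpha$ to be rotationally symmetric about that axis. For each $\theta_0\in[0,2\pi)$ I would take $\reflectionvector_\delta=(\cos\delta\cos\theta_0,\cos\delta\sin\theta_0,-\sin\delta)$, so that as $\delta\to 0^+$ the plane $\reflectionplane[\reflectionvector_\delta]$ tends to the vertical meridional plane $P_0$ with normal $(\cos\theta_0,\sin\theta_0,0)$, and $\reflectionmap[\reflectionvector_\delta]$ tends to the reflection $R_0$ across $P_0$. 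A direct computation shows that $R_0$ preserves the $z$-coordinate (hence $\rho$) and, in polar coordinates, acts by $(\theta,\phi)\mapsto(2\theta_0+\pi-\theta,\phi)$. Because $\alpha$ is smooth and compact, the sets $\reflectionset[\reflectionvector_\delta]{\alpha}^{\pm}$ and $\reflectionmap[\reflectionvector_\delta](\reflectionset[\reflectionvector_\delta]{\alpha}^{+})$ all converge in Hausdorff distance as $\delta\to 0^+$, so the hypothesis passes to the limit and yields $R_0(\alpha\intersect H_0^+)\geq\alpha\intersect H_0^-$ for every $\theta_0$, where $H_0^\pm$ denote the two open half-spaces determined by $P_0$.

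To extract pointwise information, set $f(\theta)=\sup\{\phi(Y):Y\in\alpha,\ \theta(Y)=\theta\}$ and $g(\theta)=\inf\{\phi(Y):Y\in\alpha,\ \theta(Y)=\theta\}$ on those fibres that are non-empty. Because $R_0$ preserves $\phi$ and reflects $\theta$ about $\theta=\theta_0\pm\pi/2$, the limiting inequality translates to $g(\theta_1)\geq f(\theta_2)$ whenever $\theta_1+\theta_2=2\theta_0+\pi$ with $\theta_1,\theta_2$ on opposite sides of the fixed meridian. Since $\theta_0$ ranges freely over $[0,2\pi)$, every pair of distinct $\theta_A,\theta_B$ can be expressed in this form. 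Applying the same argument with $\theta_0$ replaced by $\theta_0+\pi$---still permissible because the $z$-component of the new $\reflectionvector$ remains $-\sin\delta<0$---swaps which side is ``$+$'' and produces the reverse inequality $g(\theta_B)\geq f(\theta_A)$. Combined with the trivial $g\leq f$ on each fibre, this forces $f\equiv g\equiv\phi_0$ for some common constant $\phi_0$. Hence every fibre of $\pi|_\alpha$ is a single point at height $\phi_0$, so $\alpha\subset\{\phi=\phi_0\}$; since $\alpha$ is a smooth closed curve it must coincide with the entire horizontal circle $\{\phi=\phi_0\}$, a round circle centred at the north pole.

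The principal obstacle I anticipate is justifying the limit step: one must verify that the relation $\geq$, defined through an $\inf/\sup$ on fibres, is stable under Hausdorff (or equivalently smooth) convergence of the sets involved, with particular care needed near the fixed great circle $P_0\intersect\sphere^2$ where the decomposition of $\alpha$ into its $\pm$ parts degenerates as $\delta\to 0$. A secondary but important point is that the strict condition $V_z<0$ (rather than $V_z\neq 0$) is exactly what permits applying the hypothesis to both $\theta_0$ and $\theta_0+\pi$ simultaneously, and having both is essential for obtaining inequalities in both directions. Once the limiting inequality is in hand, the remaining polar-coordinate bookkeeping is routine.
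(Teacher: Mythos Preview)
Your proposal is correct and takes essentially the same approach as the paper: pass to the limit $\delta \to 0^+$ to obtain the reflection inequality for every horizontal $V_0$, then exploit that both $V_0$ and $-V_0$ arise as such limits (your $\theta_0$ versus $\theta_0+\pi$) to upgrade the one-sided inequality to full reflection symmetry. The only difference is in the bookkeeping of the final step: where you introduce the fibrewise $\sup$/$\inf$ functions $f,g$ and chase inequalities to get $f\equiv g\equiv\text{const}$, the paper packages the same content as the chain $\alpha_{V_0}^+ = R_{V_0}(R_{V_0}(\alpha_{V_0}^+)) \geq R_{V_0}(\alpha_{V_0}^-) = R_{-V_0}(\alpha_{-V_0}^+) \geq \alpha_{-V_0}^- = \alpha_{V_0}^+$ (using that $R_{V_0}$ is monotone for $\geq$ and $R_{V_0}=R_{-V_0}$), concludes $R_{V_0}(\alpha_{V_0}^-)=\alpha_{V_0}^+$, and then reads off that $\alpha$ has horizontal tangent at every point of $P_{V_0}\cap\alpha$, hence everywhere.
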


\begin{proof}
Let $\reflectionvector_0$ be a vector in $\RR^3$ such that $\ip{\reflectionvector_0}{\vec{e}_z} = 0$ (so that $\delta(\reflectionvector_0) = 0$). Choose any $X \in \reflectionset{(\gamma_{-\infty})}^-$. Then by assumption, we have 
\[
\reflectionmap(\reflectionset{\alpha}^+) \geq_X \reflectionset{\alpha}^-
\]
for all $\reflectionvector$ lying in the plane spanned by $\vec{e}_z$ and $\reflectionvector_0$, and with $\ip{\reflectionvector}{\vec{e}_z} < 0$ and $\delta(\reflectionvector) \in (0,\pi/4)$. By continuity, letting $\reflectionvector \to \reflectionvector_0$ we obtain $\reflectionmap[\reflectionvector_0](\reflectionset[\reflectionvector_0]{\alpha}^+) \geq_X \reflectionset[\reflectionvector_0]{\alpha}^-$ for each $X \in \gamma_{-\infty}^-$ and hence 
\[
\reflectionmap[\reflectionvector_0](\reflectionset[\reflectionvector_0]{\alpha}^+) \geq \reflectionset[\reflectionvector_0]{\alpha}^-
\]
for every $\reflectionvector_0$ with $\ip{\reflectionvector_0}{\vec{e}_z} = 0$. 

Now, we need some simple properties of $\reflectionmap[\reflectionvector_0]$ following from the fact that $\ip{\reflectionvector_0}{\vec{e}_z} = 0$:
\begin{itemize}
\item $\reflectionmap[\reflectionvector_0]^2 = \id$,
\item $\alpha \geq \beta \Rightarrow \reflectionmap[\reflectionvector_0](\alpha) \geq  \reflectionmap[\reflectionvector_0](\beta)$,
\item $\reflectionmap[\reflectionvector_0] = \reflectionmap[-\reflectionvector_0]$, and
\item $\alpha^{\pm}_{\reflectionvector_0} = \alpha^{\mp}_{-\reflectionvector_0}$.
\end{itemize}

Thus we obtain,
\begin{align*}
\reflectionset[\reflectionvector_0]{\alpha}^+ &= \reflectionmap[\reflectionvector_0]^2(\reflectionset[\reflectionvector_0]{\alpha}^+) = \reflectionmap[\reflectionvector_0](\reflectionmap[\reflectionvector_0](\reflectionset[\reflectionvector_0]{\alpha}^+)) \\
&\geq \reflectionmap[\reflectionvector_0](\reflectionset[\reflectionvector_0]{\alpha}^-) = \reflectionmap[-\reflectionvector_0](\reflectionset[-\reflectionvector_0]{\alpha}^+) \\
&\geq \reflectionset[-\reflectionvector_0]{\alpha}^- = \reflectionset[\reflectionvector_0]{\alpha}^+.
\end{align*}
We must have equality all the way through and hence 
\begin{equation}
\label{eq:reflection_invariant}
\reflectionmap[\reflectionvector_0](\reflectionset[\reflectionvector_0]{\alpha}^-) = \reflectionset[\reflectionvector_0]{\alpha}^+
\end{equation}
for any $\reflectionvector_0$. 

To finish, equation \eqref{eq:reflection_invariant} implies that $\alpha$ must have a horizontal (i.e. no $\vec{e}_z$ component) tangent at $\reflectionplane[\reflectionvector_0] \intersect \alpha$. But every point of $\alpha$ lies on $\reflectionplane[\reflectionvector_0]$ for some $\reflectionvector_0$ hence $\alpha$ has a horizontal tangent everywhere and hence is a round circle.
\end{proof}

\begin{theorem}
Let $\gamma_t$ be a convex, ancient solution to the curve shortening flow. Then $\gamma_t$ is the unique up to isometry of $\sphere^2$, family of shrinking circles ancient solution.
\end{theorem}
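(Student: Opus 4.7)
The plan is to combine the preservation of approximate symmetry at very negative times (Lemma \ref{lem:approximate_symmetry_preserved}) with the rigidity characterization of circles (Proposition \ref{prop:exact_symmetry}) to show that $\gamma_t$ is a round circle for very negative $t$, and then extend the circle property to all $t\in(-\infty,0)$ via forward uniqueness of the flow.

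First I would normalize coordinates using the smooth backwards convergence from Theorem \ref{thm:backward_limit} together with the fact that isometries of $\sphere^2$ commute with the curve shortening flow: after applying a suitable rotation, I may assume $\gamma_{-\infty}$ is the equator $\{z=0\}$, and by Corollary \ref{cor:hemisphere} then $\gamma_t\subset(\sphere^2)^+$ for every $t\in(-\infty,0)$. In this setup Lemma \ref{lem:approximate_symmetry_preserved} provides a time $T\in(-\infty,0)$ for which $\reflectionmap(\reflectionset{(\gamma_t)}^+)\geq \reflectionset{(\gamma_t)}^-$ holds for every $\reflectionvector$ with $\ip{\reflectionvector}{\vec{e}_z}<0$ and $\delta(\reflectionvector)\in(0,\pi/4)$, for all $t<T$. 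For each such $t$, the curve $\alpha=\gamma_t$ then satisfies the hypothesis of Proposition \ref{prop:exact_symmetry}, forcing $\gamma_t$ to be a round geodesic circle centred at the north pole.

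To extend this rigidity to all $t\in(-\infty,0)$, I would fix any $t_0<T$ and invoke the explicit solution: the curve shortening flow starting from a latitude circle on $\sphere^2$ is a one-parameter family of shrinking latitude circles whose evolution is governed by a first-order ODE for the latitude $\phi(t)$. Since $\gamma_{t_0}$ is such a latitude circle, forward uniqueness of smooth solutions of the flow implies that $\gamma_t$ agrees with this shrinking family on $[t_0,0)$; combined with the previous step, $\gamma_t$ is therefore a shrinking round circle on all of $(-\infty,0)$. The shrinking circle family collapsing at $t=0$ is uniquely determined by its centre, which was absorbed into the initial isometry, so $\gamma_t$ is unique up to isometry of $\sphere^2$.

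The principal obstacle in this whole chain is not the present theorem but the ingredients already in hand: Lemmas \ref{lem:backward_approximate_symmetry} and \ref{lem:approximate_symmetry_preserved}, which establish and propagate approximate symmetry via the parabolic maximum principle and Hopf boundary point lemma, and Proposition \ref{prop:exact_symmetry}, which passes from approximate symmetry to exact symmetry in the limit $\delta\to 0$. Once these are available, the theorem is a clean assembly of pieces and I do not anticipate any further technical difficulty.
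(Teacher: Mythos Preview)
Your proposal is correct and follows essentially the same route as the paper: apply Lemma \ref{lem:approximate_symmetry_preserved} to obtain the reflection inequality for all $t\in(-\infty,T)$ and all admissible $\reflectionvector$, invoke Proposition \ref{prop:exact_symmetry} at each such $t$ to conclude $\gamma_t$ is a round circle, and then use forward uniqueness of the flow to extend to all $t\in(-\infty,0)$. Your additional remarks on normalizing coordinates and the explicit shrinking-circle ODE are reasonable elaborations but not structurally different from the paper's argument.
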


\begin{proof}
The approximate symmetry preserved lemma \ref{lem:approximate_symmetry_preserved}, implies that for every $\reflectionvector$ with $\delta(\reflectionvector) \in (0,\pi/4)$, $\reflectionmap(\reflectionset{(\gamma_t)}^+) \geq \reflectionset{(\gamma_t)}^-$ for all $t\in (-\infty,T)$. The exact symmetry proposition \ref{prop:exact_symmetry} applies at each such $t \in (-\infty, T)$, showing that $\gamma_t$ is a round circle for every $t\in(-\infty,T)$. Uniqueness of solutions ensures that $\gamma_t$ is a round circle for every $t \in (-\infty,0)$.
\end{proof}
\printbibliography
\end{document}